\numberwithin{equation}{section}
\theoremstyle{plain}
\newtheorem{theorem}{Theorem}[section]
\newtheorem{lemma}[theorem]{Lemma}
\newtheorem{proposition}[theorem]{Proposition}
\theoremstyle{plain}
\numberwithin{equation}{section}
\theoremstyle{remark}
\newtheorem{remark}[theorem]{Remark}
\DeclareMathOperator{\diam}{diam}
\begin{document}
	\date{\today}
	
	\title
	{P\'olya's conjecture for thin products}

	\author{Xiang He, Zuoqin Wang}
	\address{Yau Mathematical Sciences Center\\
		Tsinghua University\\
		Beijing, 100084\\ P.R. China\\}
	\email{x-he@mail.tsinghua.edu.cn}
	
	\address{School of Mathematical Sciences\\
		University of Science and Technology of China\\
		Hefei, 230026\\ P.R. China\\}
	\email{wangzuoq@ustc.edu.cn}

	\begin{abstract}
		Let $\Omega \subset \mathbb R^d$ be a bounded Euclidean domain.  According to the famous Weyl law, both its Dirichlet eigenvalue $\lambda_k(\Omega)$ and its Neumann eigenvalue $\mu_k(\Omega)$ have the same leading asymptotics $w_k(\Omega)=C(d,|\Omega|)k^{2/d}$ as $k \to \infty$. G. P\'olya conjectured in 1954 that each Dirichlet eigenvalue $\lambda_k(\Omega)$ is greater than $w_k(\Omega)$, while each  Neumann eigenvalue $\mu_k(\Omega)$ is no more than $w_k(\Omega)$. In this paper we prove P\'olya's conjecture for  thin products, i.e.   domains of the form $(a\Omega_1) \times \Omega_2$, where $\Omega_1, \Omega_2$ are Euclidean domains, and $a$ is small enough. We also prove that the same inequalities hold if $\Omega_2$ is replaced by a Riemannian manifold, and thus get P\'olya's conjecture for a class of ``thin" Riemannian manifolds with boundary.  
	\end{abstract}
	\maketitle
	
	\section{Introduction}
	
	Let $\Omega \subset \mathbb{R}^d$ be a bounded domain.   
	Then the Dirichlet Laplacian on $\Omega$ has discrete spectrum  which forms an increasing sequence of positive numbers (each with finite multiplicity) that tend to infinity, 
	\[
	0< \lambda_1(\Omega)\le \lambda_2(\Omega)\leq \lambda_3(\Omega) \leq \cdots \nearrow +\infty,
	\]
	and the Neumann Laplacian on $\Omega$ has a similar discrete spectrum (under suitable boundary regularity assumptions, which we always assume below without further mentioning)
	\[
	0= \mu_0(\Omega)\le \mu_1(\Omega)\leq \mu_2(\Omega) \leq \cdots \nearrow +\infty. 
	\]	
	Moreover, by a simple variational argument one has $\mu_{k-1}(\Omega)<\lambda_k(\Omega)$ for all $k$, which was strengthened to 
	\begin{equation}\label{Friedl}
		\mu_{k}(\Omega)<\lambda_k(\Omega),\quad \forall k 
	\end{equation}
	by L. Friedlander in \cite{Fri91} (See also N. Filonov \cite{Fil2004}), answering a conjecture of L. E. Payne \cite{PP55}.
	
	Starting from H. Weyl (\cite{W}), the asymptotic behavior of the  eigenvalues $\lambda_k(\Omega)$ and $\mu_k(\Omega)$ as $k \to \infty$ has attracted a lot of attention. In fact, both $\lambda_k(\Omega)$ and $\mu_k(\Omega)$ admit the same leading term asymptotics 
	\[ 
	\lambda_k(\Omega) \sim  \frac{4\pi^2}{(\omega_d |\Omega|)^{\frac 2d}} k^{\frac 2d}
	\quad \text{and} \quad 
	\mu_k(\Omega) \sim  \frac{4\pi^2}{(\omega_d |\Omega|)^{\frac 2d}} k^{\frac 2d},
	\]  
	where $|\Omega|$ represents the volume of $\Omega$, and $\omega_d$ is the volume of the unit ball in $\mathbb R^d$. 
	
	In his classical book \cite{P54},  G. P\'olya conjectured  (in a slightly weaker form for the Neumann case)  
	that for each $k$, the $k^{\mathrm{th}}$ Dirichlet eigenvalue 
	\begin{equation}\label{PCDir}
		\lambda_k(\Omega) \ge   \frac{4\pi^2}{(\omega_d |\Omega|)^{\frac 2d}} k^{\frac 2d}
	\end{equation}
	while the $k^{\mathrm{th}}$ positive Neumann eigenvalue    
	\begin{equation}\label{PCNeu}
		\mu_k(\Omega) \le   \frac{4\pi^2}{(\omega_d |\Omega|)^{\frac 2d}} k^{\frac 2d}.
	\end{equation}
	As observed by G. P\'olya, these conjectured inequalities hold for all rectangles. For arbitrary domain, the conjecture holds for $k=1$ (the Faber-Krahn inequality (\cite{F23}, \cite{K25}) for the Dirichlet eigenvalue, and the Szeg\"o-Weinberger inequality (\cite{S54}, \cite{We}) for the Neumann case) and $k=2$ (the Krahn-Szeg\"o inequality (\cite{K26}) for the Dirichlet case, and recently proved by D. Bucur and A. Henrot in \cite{BH19} for the Neumann case). 
	
	The first major progress on the conjecture was made by G. P\'olya himself in 1961 (\cite{P61}),  in which he presented an elegant proof of his conjecture for planar tiling domains (in fact G. P\'olya's proof for the Neumann eigenvalue case relied on the assumption of regular tiling, which was removed by R. Kellner in 1966 \cite{K66}). The idea is to compare the $k$th eigenvalue of $\Omega$ to the $kn_r$th eigenvalue of the unit square, where $n_r$ is the number of $r\Omega$'s that almost tile the unit square, and then apply Weyl's asymptotics to the later. 

	For an arbitrary Euclidean domain $\Omega \subset \mathbb R^d$, 
	P. Li and S.T. Yau proved in \cite{LY} that 
	\begin{equation}
		\label{LYineq}
		\sum_{j=1}^k\lambda_j(\Omega) \ge \frac d{d+2}\frac{4\pi^2}{(\omega_d |\Omega|)^{\frac 2d}} k^{\frac {d+2}d}, 
	\end{equation}
	and as a consequence, got a weaker version of P\'olya's inequality for all Dirichlet eigenvalues, 
	\begin{equation}
		\label{LYineq2} \lambda_k(\Omega) \ge \frac{d}{d+2}\frac{4\pi^2}{(\omega_d |\Omega|)^{\frac 2d}} k^{\frac 2d}.	
	\end{equation}
	{In \cite{Kr92}, P. Kr\"oger established two upper bounds for the Neumann eigenvalues of any Euclidean domain with piecewise smooth boundary:
	\[
	\sum_{j=1}^{k-1}\mu_j(\Omega)\le \frac d{d+2}\frac{4\pi^2}{(\omega_d |\Omega|)^{\frac 2d}} k^{\frac {d+2}d},
	\] 
	and
	\begin{equation}\label{upbdNeu}
		\mu_k(\Omega)\le \bigg(\frac{d+2}{2}\bigg)^{\frac 2d} \frac{4\pi^2}{(\omega_d |\Omega|)^{\frac 2d}} k^{\frac 2d}.	
	\end{equation}
	Very recently, N. Filonov improved the bound \eqref{upbdNeu} for convex bounded domains in $\mathbb{R}^2$ (see \cite{Fil02}), obtaining a result that is closer to the upper bound predicted by Pólya’s conjecture.}
	
	Another important class of domains satisfying P\'olya's conjecture was obtained by A. Laptev  \cite{AL}, in which he proved that if P\'olya's conjecture \eqref{PCDir} holds for $\Omega_1 \subset \mathbb R^{d_1}$, where $d_1 \ge 2$, then P\'olya's conjecture \eqref{PCDir} also holds for any domain of the form $\Omega=\Omega_1 \times \Omega_2$. One key ingredient in his proof is the following inequality 	(which is a special case of Berezin-Lieb inequality (\cite{Bere72}, \cite{Lieb73}) and is equivalent to  Li-Yau's inequality \eqref{LYineq} above) for the Riesz mean,
	\begin{equation}\label{RieSumIn}
		\sum_{\lambda_k(\Omega)< \lambda} (\lambda-\lambda_k(\Omega))^\gamma \le  L_{\gamma,d}|\Omega| \lambda^{\gamma+\frac{d}{2}},
	\end{equation}
	where $\gamma \ge 1$, and
	\begin{equation}\label{Ld}
		L_{\gamma, d}= \frac{\Gamma(\gamma+1)}{(4\pi)^{\frac{d}{2}} \Gamma(\gamma+1+\frac{d}{2})}.
	\end{equation} 
	For Neumann eigenvalues, A. Laptev also got a similar inequality
	\begin{equation}\label{RieSumInN}
		\sum_{\mu_k(\Omega)< \lambda} (\lambda-\mu_k(\Omega))^\gamma \ge  L_{\gamma,d}|\Omega| \lambda^{\gamma+\frac{d}{2}} 
	\end{equation}
	using which one can get P\'olya's conjecture \eqref{PCNeu}  for   $\Omega=\Omega_1 \times \Omega_2$ provided $\Omega_1$ satisfies \eqref{PCNeu} and has dimension $d_1 \ge 2$. 
	For other recent progresses concerning P\'olya's conjecture, we refer to \cite{Fre01}, \cite{FLP}, \cite{Fre02}, \cite{LFH17} etc.

	Recently, by developing the links between Laplacian eigenvalues of planar disks with certain lattice counting problems, N. Filonov, M. Levitin, I. Polterovich and D. Sher (\cite{FLPS}) proved that P\'olya's conjecture holds for planar disks (and for Euclidean balls of all dimensions for the Dirichlet case), and thus gave the first non-tiling planar domain for which P\'olya's conjecture is known to be true. A key ingredient in the proof is certain uniform bounds between the eigenvalue and   lattice point counting functions. For the Neumann case, they apply different tricks to handle large eigenvalues and small eigenvalues. Building upon and extending the methods developed for disks and balls, very recently, they established the validity of P\'olya’s conjecture \eqref{PCDir} for annular domains (see \cite{Polyaann}).  
	
	In this paper  
	we will prove P\'olya's conjecture for domains of product type that are ``thin" in one component, namely regions of the form   
	\[
	\Omega = a\Omega_1 \times \Omega_2  
	\]
	for $a$ small enough, without assuming that $\Omega_1$ or $\Omega_2$ satisfies P\'olya's conjecture. In particular, we obtain lots of non-tiling domains satisfying P\'olya's conjecture. In the proofs we combine tricks used in   \cite{FLPS}, \cite{AL} and \cite{P61}. More precisely, we treat large eigenvalues and small eigenvalues separately, we use Weyl law extensively for large eigenvalues, and the product structure lies in the core of the proof. 	
	
	We first prove	
	\begin{theorem}\label{mthm2}
		Let $\Omega_1 \subset \mathbb R^{d_1}$  and $\Omega_2\subset \mathbb R^{d_2}$ be   bounded Euclidean domains, where $d_1, d_2 \ge 2$, $\Omega_1$ has Lipschitz boundary and $\Omega_2$ has piecewise smooth boundary. Then, there exists $a_0> 0$ (depends on $\Omega_1$ and $\Omega_2$) such that for any $0< a< a_0$, the product $\Omega = a\Omega_1\times \Omega_2$ satisfies P\'olya's conjecture \eqref{PCDir} and \eqref{PCNeu}.
	\end{theorem}	 
	
	The dependence of the constant $a_0$ with respect to $\Omega_1$ and $\Omega_2$ arises from  various constants in the proof (including the constants needed for the  two-term Riesz mean inequalities and Seeley's inequalities below), and thus are subtle in general.  However, if $\Omega_1$ is convex, then the dependence on $\Omega_1$ is quite explicit (in terms of its diameter, in-radius, volume and  surface area). See Remark \ref{explicityconsforconv} and Remark \ref{ConstforconvN} below.  We will give a class of domains for which the dependence on $\Omega_2$ is explicitly computable at the end of this paper, see Remark \ref{computaCOmega}. 
	
	Here is the strategy of proof: Following Laptev's argument \cite{AL}, we write the eigenvalue counting function of $a\Omega_1 \times \Omega_2$ as the sum of many  eigenvalue counting functions of $\Omega_2$. Although we don't have P\'olya's inequality for $\Omega_2$, we do have weaker inequalities  (See \eqref{2termweyl} and \eqref{2termweylN} below) that follow from Seeley's version of the two-term Weyl law (which only requires $\Omega_2$ to have piecewise smooth boundary). Now instead of applying Laptev's Berezin inequalities on Riesz mean above, we apply  stronger two-term inequalities on Riesz mean, namely  \eqref{RieszDiri} and \eqref{RiezsNeu} obtained by R. Frank and S. Larson in \cite{FL24Lip} (see also \cite{FG12}, \cite{FL20}) to control the sum of the first term in Seeley's inequalities. We will have to distinguish the two boundary conditions:
	\begin{itemize}
		\item In the Dirichlet setting, we also use Laptev's Riesz sum inequality to control the sum of the second term in Seeley's inequalities. By comparing what we lose from Seeley's two-term bound and what we gain from these two-term Riesz mean bound, we are able to prove that for $a$ small enough, P\'olya's inequalities hold for $\lambda$ large enough (which depends on $a$). For smaller $\lambda$, we use Proposition 2.1 in \cite{FL24convex}, and thus (by taking $a$ even smaller) give us the demanded gap to prove P\'olya's inequality. This argument works perfectly well for $d_2 \ge 3$, but fails for $d_2=2$ since we can't apply Laptev-type inequality on Riesz mean (which requires $\gamma=\frac {d_2-1}2 \ge 1$) to control the sum of the second term of Seeley's inequality. Fortunately, we can overcome this problem by using Li-Yau's estimate \eqref{LYineq2} above and an explicit integral computation.
		\item  In the Neumann setting, one can't use the same argument since we also need an upper bound of (the sum of) the second term in Seeley's inequality for large $\lambda$, which does not follow from any Riesz mean inequality for Neumann eigenvalues. So instead we use Weyl's law directly to control the second term, and as a result we don't need to distinguish the case $d_2 = 2$ with $d_2 \ge 3$. Another difference with the Dirichlet case is that we do have very small eigenvalues in this case, but fortunately the classical Szeg\"o-Weinberger inequality is enough for us to handle these eigenvalues. 
	\end{itemize}

	Note that Laptev's argument does not work for the case $d_1=1$, since the inequalities \eqref{RieSumIn} and \eqref{RieSumInN} require $\gamma \ge 1$. Even though the interval $(0,1)$ tiles $\mathbb R$, it is still not known whether $(0,1) \times \Omega$ satisfies P\'olya's conjecture for general $\Omega$. In the second part of this paper, we turn to study  P\'olya's conjecture for thin products $(0,a)\times \Omega$. 
	Instead of writing the eigenvalue counting function of $(0,a)\times \Omega$ as the sum of many  eigenvalue counting functions of $(0,a)$ (which is a tiling domain) that we have a nice control, we will write it as the sum of many  eigenvalue counting functions of $\Omega$ and apply Seeley's two-term inequalities. We then carefully analyze the two sums and show that the sums are controlled by some  explicit integrals (similar trick was used in \cite{FLPS}). As a result we shall prove that in this case, all thin products satisfy P\'olya's conjecture:	
	\begin{theorem}\label{mthm}
		Let $\Omega \subset \mathbb{R}^d$ be a bounded domain with piecewise smooth boundary, then there exists $a_0>0$ (depends on $\Omega$) such that for any $0<a<a_0$, $(0,a)\times \Omega$ satisfies P\'olya's conjecture \eqref{PCDir} and \eqref{PCNeu}.
	\end{theorem}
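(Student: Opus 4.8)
The plan is to adapt the product-decomposition idea to $(0,a)\times\Omega$, but in the ``opposite direction'' from Laptev's argument. Separating variables, the Dirichlet (resp.\ Neumann) eigenvalues of $(0,a)\times\Omega$ are exactly the numbers $\frac{\pi^2 n^2}{a^2}+\lambda_k(\Omega)$ for $n\ge 1$, $k\ge 1$ (resp.\ $\frac{\pi^2 n^2}{a^2}+\mu_k(\Omega)$ for $n\ge 0$, $k\ge 0$). Hence the eigenvalue counting function is
\[
N^D_{(0,a)\times\Omega}(\lambda)=\sum_{n\ge 1} N^D_{\Omega}\!\Big(\lambda-\frac{\pi^2 n^2}{a^2}\Big),
\qquad
N^N_{(0,a)\times\Omega}(\lambda)=\sum_{n\ge 0} N^N_{\Omega}\!\Big(\lambda-\frac{\pi^2 n^2}{a^2}\Big),
\]
where the sums are finite. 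P\'olya's conjecture \eqref{PCDir} for the product is equivalent to the statement $N^D_{(0,a)\times\Omega}(\lambda)\le \frac{|(0,a)\times\Omega|}{(2\pi)^{d+1}}\,\omega_{d+1}\,\lambda^{(d+1)/2}=\frac{a|\Omega|\,\omega_{d+1}}{(2\pi)^{d+1}}\lambda^{(d+1)/2}$ for all $\lambda$, and similarly \eqref{PCNeu} is the reverse inequality for $N^N$. So the whole problem reduces to estimating the one-dimensional-in-$n$ sums above.

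The key input for $N_\Omega$ is Seeley's two-term Weyl law, which (since $\Omega$ has piecewise smooth boundary) gives, for the Dirichlet case,
\[
N^D_\Omega(\mu)=\frac{\omega_d|\Omega|}{(2\pi)^d}\mu^{d/2}-\frac14\frac{\omega_{d-1}|\partial\Omega|}{(2\pi)^{d-1}}\mu^{(d-1)/2}+O\big(\mu^{(d-2)/2}\big)
\]
(the reference in the text is \eqref{2termweyl}), and the Neumann analogue \eqref{2termweylN} has $+\frac14$ in front of the surface term. The strategy is: plug these expansions into the sum over $n$, and compare with the target $\frac{a|\Omega|\omega_{d+1}}{(2\pi)^{d+1}}\lambda^{(d+1)/2}$. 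The leading term $\frac{\omega_d|\Omega|}{(2\pi)^d}\sum_n(\lambda-\pi^2n^2/a^2)_+^{d/2}$ is, by comparing the sum with the integral $\int_0^\infty(\lambda-\pi^2 t^2/a^2)_+^{d/2}\,dt=\frac{a}{\pi}\lambda^{(d+1)/2}\int_0^1(1-s^2)^{d/2}ds$, essentially equal to the target up to the Euler--Maclaurin boundary correction. For the Dirichlet sum $\sum_{n\ge1}$, the first Euler--Maclaurin correction is $-\tfrac12 f(0)$ with $f(t)=(\lambda-\pi^2t^2/a^2)^{d/2}$, i.e.\ roughly $-\tfrac12\lambda^{d/2}$, which \emph{helps} (it lowers the counting function, in the right direction for \eqref{PCDir}); for the Neumann sum $\sum_{n\ge0}$ it is $+\tfrac12\lambda^{d/2}$, which again \emph{helps} (raises the counting function, right direction for \eqref{PCNeu}). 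Crucially, this gain is of order $\lambda^{d/2}$, \emph{one full power larger} than the Seeley surface term $\lambda^{(d-1)/2}$ summed over $n$ (which contributes order $a\lambda^{d/2}$, hence is of smaller order in $a$). So for $a$ small the Euler--Maclaurin gain from the one-dimensional lattice sum dominates the bad surface term, and P\'olya's inequality follows --- first for $\lambda$ above some threshold $\lambda_0$, and then for $\lambda<\lambda_0$ by taking $a$ even smaller (so that the only nonzero term, or the only few terms, in the sum reduce to the $n=1$, resp.\ $n=0,1$ counting function of $\Omega$ scaled into a regime where the inequality is checkable directly, or simply $N_\Omega\equiv 0$ for $\lambda$ below the first eigenvalue).

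The main obstacle I anticipate is making the threshold argument uniform: the crossover scale $\lambda_0$ where ``Euclidean asymptotics take over'' depends on $a$, so one must control the error terms in Seeley's law \emph{uniformly in the argument $\mu=\lambda-\pi^2n^2/a^2$ as $\mu\to 0^+$}, where the two-term expansion degenerates. The cleanest way around this is to avoid using Seeley's expansion for small $\mu$ altogether: use the a~priori bounds $N^D_\Omega(\mu)\le C\mu^{d/2}$ (Li--Yau, \eqref{LYineq2} rearranged, or just $N^D_\Omega(\mu)\le \frac{\omega_d|\Omega|}{(2\pi)^d}\mu^{d/2}$ plus lower-order, which is exactly what Seeley gives with a sign one must be careful about) and for Neumann a matching two-sided bound, splitting the $n$-sum into ``bulk'' terms ($\mu$ large, use Seeley) and ``edge'' terms ($\mu$ bounded, finitely many, use crude bounds), and absorb the edge contribution into the order-$\lambda^{d/2}$ Euler--Maclaurin surplus. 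A secondary technical point is the Neumann case at the bottom of the spectrum: $\mu_0(\Omega)=0$ forces an $n=0$ term equal to $N^N_\Omega(\lambda)$, and one needs the two-term \emph{upper} bound for $N^N_\Omega$ from \eqref{2termweylN} together with the fact that the surface term there has the favorable sign; this is exactly the place where the hypothesis of piecewise smooth boundary (hence validity of Seeley's law in both directions) is used, and it is why no splitting into $d=2$ versus $d\ge3$ is needed here, in contrast to Theorem~\ref{mthm2}.
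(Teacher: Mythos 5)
Your proposal follows essentially the same route as the paper: decompose the counting function of $(0,a)\times\Omega$ as a sum over the one-dimensional lattice $\{\pi^2 n^2/a^2\}$, plug in Seeley's two-term bounds for $N_\Omega$, and exploit the fact that the sum-vs-integral discrepancy for the one-dimensional lattice contributes a favorable term of order $\lambda^{d/2}$ while the Seeley surface term, summed in $n$, is only of order $a\lambda^{d/2}$. That is exactly the paper's mechanism. However, several of your side-remarks are off, and if taken literally they would derail the execution.

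First, the ``first Euler--Maclaurin correction $-\tfrac12 f(0)$'' is too naive for $d\ge 3$: the function $f_d(x)=(\lambda-\pi^2x^2/a^2)^{d/2}$ is \emph{not} concave on all of $(0,a\sqrt\lambda/\pi)$; it is concave only up to $x_0=\sqrt{\lambda/(d-1)}\,a/\pi$ and convex beyond. So one cannot cash in the full $\tfrac12\lambda^{d/2}$. The paper splits the sum at $N_a^\lambda=\lfloor x_0\rfloor$, uses the trapezoid comparison on the concave piece, and gets a gain of only $\tfrac12\big(\lambda^{d/2}-f_d(N_a^\lambda)\big)\ge\tfrac12\big(1-(\tfrac{4d-5}{4d-4})^{d/2}\big)\lambda^{d/2}$. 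This is still of order $\lambda^{d/2}$, so the conclusion survives, but you need the concavity argument, not one-term Euler--Maclaurin. Relatedly, your claim that ``no splitting into $d=2$ versus $d\ge 3$ is needed here'' is incorrect: the paper \emph{does} split, for reasons intrinsic to this theorem (for $d=2$ the sum $\sum_l(\lambda-l^2\pi^2/a^2)$ is evaluated in closed form; for $d\ge 3$ the concavity/convexity structure of $f_d$ is used), not because of the Riesz-mean restriction $\gamma\ge1$ that forced the split in Theorem~\ref{mthm2}.

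Second, the ``main obstacle'' you anticipate --- controlling Seeley's error uniformly as $\mu\to 0^+$ --- is not an obstacle at all. The inequalities \eqref{2termweyl} and \eqref{2termweylN} are set up to hold for \emph{every} $\lambda>0$: for small $\lambda$, $\mathcal N^D_\Omega(\lambda)=0$ (since $\lambda_1(\Omega)>0$) so the upper bound is trivial, and $\mathcal N^N_\Omega(\lambda)\ge 1$ while the Neumann lower bound's right-hand side can be made negative by enlarging $C(\Omega)$. Consequently the proposed bulk/edge splitting of the $n$-sum is unnecessary. What does need a separate argument is the range of \emph{$\lambda$ itself}: for Dirichlet $d\ge 3$, the concavity gain is only available when $N_a^\lambda\ge1$, i.e.\ $\lambda\ge(d-1)\pi^2/a^2$, and the intermediate range $\pi^2/a^2\le\lambda<(d-1)\pi^2/a^2$ corresponds to $\mu=\lambda a^2/\pi^2\in[1,d-1)$ --- a compact interval --- so one can take an infimum ($H_1>0$ in the paper) to recover a uniform $\lambda^{d/2}$-order gain. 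Finally, for the Neumann case at small $\lambda<\pi^2/a^2$, the target reduces to comparing $\mathcal N^N_\Omega(\lambda)$ with $C_{d+1}a|\Omega|\lambda^{(d+1)/2}< C_{d+1}\pi|\Omega|\lambda^{d/2}$; what saves the day is the strict dimensional inequality $C_{d+1}\pi<C_d$ (a form of \eqref{Gampri}) together with the Weyl leading asymptotics for $\mathcal N^N_\Omega$ --- not an ``upper bound for $N^N_\Omega$'' as you wrote, which would point in the wrong direction and seems to be a sign slip.
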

	
	Since scaling will not affect  P\'olya's inequalities,  we immediately see that  for any bounded Euclidean domain $\Omega$, there exists a constant $C>0$ such that  for all $A>C$, $(0,1)\times A\Omega$ satisfies   P\'olya's conjecture. Unfortunately we still can't prove P\'olya's conjecture for   products of the form $(0,1) \times a\Omega$ for small $a$, which obviously implies P\'olya's conjecture for $(0,1) \times \Omega$.

	The next part of this paper devotes to P\'olya's inequalities for Riemannian manifolds with boundary. Although the original conjecture was  proposed only for Euclidean domains, people did study the analogous problem in the more general Riemannian setting. For example, P. B\'erard and G. Besson proved in \cite{BB80} that for a 2-dimensional hemisphere (or a quarter of a sphere, or even an octant of a sphere), both Dirichlet eigenvalues and Neumann eigenvalues satisfy P\'olya's inequalities above. Recently in \cite{FMS22}, P. Freitas, J. Mao and I. Salavessa studied the problem for hemispheres in arbitrary dimension. They showed that \eqref{PCNeu} holds for Neumann eigenvalues of hemispheres in any dimension, while  \eqref{PCDir} fails for Dirichlet eigenvalues when $d > 2$, and they derived sharp inequality for Dirichlet eigenvalues by adding a correction term. 
	
	It is thus a natural problem to find out more Riemannian manifolds with boundary satisfying P\'olya's inequalities. Note that in the proof of Theorem \ref{mthm2} and Theorem \ref{mthm}, for $\Omega_2$ and $\Omega$ we mainly used Seeley's two-term Weyl's inequality. As a result, by literally repeating the proof one can easily see  that for any   closed Riemannian manifold  $M$, the Neumann eigenvalues of the product $a\Omega \times M$  satisfy P\'olya conjecture \eqref{PCNeu} as long as $a$ is small enough. For the Dirichlet case, there will be one extra term (since $0$ is an eigenvalue of $M$) in the eigenvalue counting function of the product, namely the number of eigenvalues of $\Omega$ that is less than $a^2\lambda$, which can be explicitly calculated if $d_1=\dim \Omega=1$ and can be controlled via Li-Yau's estimate \eqref{LYineq2} if $d_2 \ge 2$. As a result, we are able to prove that P\'olya's conjecture holds  for such Riemannian manifolds with boundary:   
	\begin{theorem}\label{mthm3}
		Let $\Omega\subset \mathbb R^{d_1}$ be a bounded domain with Lipschitz boundary and $(M,g)$ be a closed Riemannian manifold of dimension $d_2\ge 2$. Then there exists $a_0>0$ (depends on $\Omega$ and $M$) such that for any $0<a<a_0$, $a\Omega\times M$ satisfies P\'olya's inequalities \eqref{PCDir} and \eqref{PCNeu}.
	\end{theorem}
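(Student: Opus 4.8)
The plan is to mimic the proofs of Theorem \ref{mthm2} and Theorem \ref{mthm}, replacing the second Euclidean factor $\Omega_2$ by the closed manifold $M$ and using the fact that the only input we really need about the second factor is a two-term Weyl law, which holds for any closed Riemannian manifold by Seeley (here with no boundary term, so the two-term law is in fact the sharper statement $N_M(\lambda)=C_{d_2}|M|_g\lambda^{d_2/2}+O(\lambda^{(d_2-1)/2})$, where $N_M$ is the eigenvalue counting function of the Laplace--Beltrami operator on $(M,g)$). Since $\partial(a\Omega\times M)=(a\,\partial\Omega)\times M$, separation of variables gives, for the Dirichlet Laplacian on $a\Omega\times M$,
\begin{equation*}
N^{D}_{a\Omega\times M}(\lambda)=\sum_{j\ge 0} N^{D}_{a\Omega}\!\bigl(\lambda-\nu_j(M)\bigr)=\sum_{j\ge 0} N^{D}_{\Omega}\!\bigl(a^2(\lambda-\nu_j(M))\bigr),
\end{equation*}
where $0=\nu_0(M)\le\nu_1(M)\le\cdots$ are the eigenvalues of $M$, and similarly with Neumann conditions on $a\,\partial\Omega$ (the boundary conditions on $M$ are vacuous). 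The target P\'olya bound is, with $d=d_1+d_2$ and $|a\Omega\times M|=a^{d_1}|\Omega|\,|M|_g$, the statement that $N^{D}_{a\Omega\times M}(\lambda)\le C(d)a^{d_1}|\Omega|\,|M|_g\,\lambda^{d/2}$ for the Dirichlet counting function and the reverse inequality for Neumann, where $C(d)=(\omega_d/(4\pi^2)^{d/2})\cdot\text{(the constant matching \eqref{PCDir})}$.

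The main steps, in order. First, isolate the $j=0$ term: for Dirichlet this contributes $N^{D}_{\Omega}(a^2\lambda)$, the extra term flagged in the discussion before the theorem. If $d_1=1$ this is $\lfloor a\sqrt\lambda/\pi\rfloor$ or similar and is handled by an explicit computation; if $d_1\ge 2$ it is controlled by Li--Yau \eqref{LYineq2}, which gives $N^{D}_{\Omega}(\mu)\le C(d_1)(1+\tfrac{2}{d_1})^{d_1/2}|\Omega|\mu^{d_1/2}$, a bound with the correct power of $\lambda$ but a constant that — crucially — after multiplication by $a^{d_1}$ becomes negligible compared to the $a^{d_1}$-independent main term of the full sum; I would track this loss and show it is absorbed. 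For Neumann the $j=0$ term is $N^{N}_{\Omega}(a^2\lambda)\ge 1$ and helps rather than hurts the desired inequality. Second, for the tail $j\ge 1$: apply to $N^{D}_{a\Omega}$ (resp.\ $N^{N}_{a\Omega}$) the two-term Riesz-mean bounds of Frank--Larson \cite{FL20} (needing $\partial\Omega$ Lipschitz — satisfied, since $C^1\Rightarrow$ Lipschitz) and Frank--Geisinger \cite{FG12} (needing $\partial\Omega$ in $C^1$, which is our hypothesis), in the form \eqref{RieszDiri} and \eqref{RiezsNeu}. Concretely, Abel summation converts $\sum_{j\ge 1}N^{D}_{a\Omega}(\lambda-\nu_j)$ into an integral of a Riesz mean of $a\Omega$ against $dN_M$, and then inserting Seeley's two-term law for $N_M$ splits the estimate into a main term — which reproduces exactly $C(d)a^{d_1}|\Omega|\,|M|_g\lambda^{d/2}$ — plus a correction coming from the $\pm$ surface term of $a\Omega$ and from the $O(\lambda^{(d_2-1)/2})$ error of $M$. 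Third, the balancing: the surface contribution of $a\Omega$ to the Riesz mean carries a factor $a^{d_1-1}$ (one power of $a$ less than the main term's $a^{d_1}$), while the Frank--Larson / Frank--Geisinger bounds give it with the \emph{favorable sign} (the Dirichlet two-term Riesz mean is bounded \emph{above} by main term \emph{minus} a positive surface term, etc.). So for $\lambda$ large — large relative to a threshold depending on $a$ — the genuine two-term gain beats everything we lost, giving P\'olya for $\lambda\ge\Lambda(a)$; for $\lambda<\Lambda(a)$ one invokes the strictness of Laptev's one-term inequalities (\S\ref{subsecstrict}) to obtain a uniform gap and then shrinks $a$ so that the $a^{d_1-1}$-order error and the $j=0$ Li--Yau term both fall below that gap. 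Choosing $a_0$ small enough to make both regimes work simultaneously finishes the proof.

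I expect the main obstacle to be exactly the same bookkeeping subtlety as in Theorem \ref{mthm2}: making the threshold $\Lambda(a)$ and the smallness of $a_0$ mutually consistent. The two-term-gain argument forces $\lambda$ to be large in an $a$-dependent way, while closing the small-$\lambda$ gap via strictness forces $a$ small in a $\Lambda$-dependent way, so one must check the dependence is not circular — essentially that the gap from strictness, which is uniform in $\lambda$ on compact ranges, dominates an error that is $O(a^{d_1-1}\lambda^{(d-1)/2})$ plus $O(a^{d_1}\lambda^{(d_2-1)/2}\cdot\lambda^{d_1/2})$ plus the $j=0$ term, uniformly for $\lambda$ in the relevant bounded range. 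A secondary point requiring care is that $M$ contributes no boundary term but its remainder $O(\lambda^{(d_2-1)/2})$ is only a one-sided-in-sign bound in neither direction, so in the Dirichlet case one must pair the \emph{upper} bound for $N_M$ with the \emph{upper} Riesz bound for $a\Omega$ and in the Neumann case pair the \emph{lower} bound for $N_M$ with the \emph{lower} Riesz bound — a sign-matching that is routine but must be done explicitly. Once these are in place the dimension hypothesis $d_2\ge2$ is used only where it always is (to guarantee the Riesz exponent $\gamma=(d_2-1)/2\ge1/2$ or, in the borderline cases, to replace the missing Laptev bound by Li--Yau exactly as in the $d_2=2$ discussion above), so no new ideas beyond those of the previous two theorems are needed.
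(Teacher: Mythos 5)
Your proposal is correct and, despite starting from the reversed decomposition $\sum_{j\ge0}N^D_{\Omega}(a^2(\lambda-\nu_j(M)))$ (summing over the eigenvalues of $M$ rather than of $\Omega$), it immediately converts back via Abel summation/Fubini to the paper's decomposition $\sum_{l}\mathcal N_M(\lambda-a^{-2}\lambda_l(\Omega))$, after which the ingredients you list — the two-term Weyl law for the closed manifold $M$ (due to Levitan and Avakumovi\'c rather than Seeley, a minor misattribution since $\partial M=\emptyset$), Li--Yau or an explicit computation for the extra $j=0$ / $Z_a^\lambda$ term, the Frank--Larson and Frank--Geisinger two-term Riesz bounds for $\Omega$ with the matched signs, and the strict Laptev inequalities to close the bounded-$\lambda$ gap — are exactly those the paper uses, split into the same three cases ($d_1=1,d_2=2$; $d_1=1,d_2\ge3$; $d_1\ge2$). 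In short this is essentially the paper's proof, with the initial decomposition-by-$M$ being an unnecessary detour that Fubini undoes on the next line.
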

	
	Another natural question is: if $\Omega$ satisfies P\'olya's conjecture, and $\Omega'$ is ``sufficiently close" to $\Omega$ in some sense, can we prove P\'olya's conjecture for  $\Omega'$? Applying the techniques we developed in the proofs of Theorem \ref{mthm2} and \ref{mthm}, we will give a positive result in a special product setting. More precisely, we will show that if    $\Omega_2 \subset \mathbb R^{d_2}$ ($d_2 \ge 2$) satisfies P\'olya's conjecture, then  for any  $\Omega_3 \subset \Omega_2$, the product domain $\Omega_1 \times (\Omega_2 \setminus a \Omega_3)$ (which is not a thin product, but the complement of a thin product) satisfies P\'olya's conjecture for $a$ small enough. See Theorem \ref{speDiri} and Theorem \ref{speNeu} for precise statement.

	The arrangement of this paper is as follows. In Section \ref{secpre} we will list the two-term inequalities for the eigenvalues counting functions and for the Riesz means that will be used later. In Section \ref{pfmthm2} we will prove Theorem \ref{mthm2}, and in Section \ref{pfmthm} we will prove Theorem \ref{mthm}. In Section \ref{pfmthm3} we will turn to the Riemannian manifold setting and prove Theorem \ref{mthm3}. Moreover we will explain how to get similar results for a larger class of eigenvalue problems. 
	In Section \ref{spepro}, we prove P\'olya's conjecture for $\Omega_1 \times (\Omega_2 \setminus a \Omega_3)$, where  $\Omega_1 \times \Omega_2$ is the product domain in Laptev's theorem. 
	Finally in Section \ref{twoex} we will give an explicit non-tiling planar domain $\Omega$ and explicitly calculate the constant involved in the proof,  and as a result, show that the Dirichlet eigenvalues of $[0,\frac{1}{4\pi}]\times \Omega$ for that $\Omega$ satisfies \eqref{PCDir}. 
	

	\section{Some preparations}\label{secpre}

	For any bounded domain $\Omega \subset \mathbb R^d$, we denote the   Dirichlet eigenvalue counting function by
	\[
	\mathcal{N}^D_\Omega(\lambda):=\#\{n:\ \lambda_n(\Omega)< \lambda\},
	\]
	and the  Neumann eigenvalue counting function by
	\[ 
	\mathcal{N}^N_\Omega(\lambda):=\#\{n:\ \mu_n(\Omega)< \lambda\}.
	\]
	Then the inequality \eqref{Friedl} implies
	\[\mathcal{N}^D_\Omega(\lambda) \le \mathcal{N}^N_\Omega(\lambda), \quad \forall \lambda>0,\]
	while P\'olya's conjectures \eqref{PCDir} and \eqref{PCNeu}  can be restated as 
	\begin{equation}\label{PolConjD}
		\mathcal{N}^D_\Omega(\lambda)\le C_d |\Omega| \lambda^{\frac{d}{2}},\qquad \forall \lambda>0,
	\end{equation}
	for all bounded domains, and 
	\begin{equation}\label{PolConjN}
		\mathcal{N}^N_\Omega(\lambda)\ge C_d |\Omega| \lambda^{\frac{d}{2}},\qquad \forall \lambda>0,
	\end{equation}
	for all bounded domains with suitable boundary regularity, 
	where the constant
	\begin{equation}\label{Cd}
		C_d=\frac{\omega_d}{(2\pi)^{d}}=\frac{1}{(4\pi)^{\frac{d}{2}}\Gamma(\frac{d}{2}+1)}=L_{0,d}. 
	\end{equation}
	Since the unit balls satisfy $B^{d_1+d_2} \subset   B^{d_1}\times   B^{d_2}$, one has $\omega_{d_1+d_2}<\omega_{d_1}\cdot \omega_{d_2}$ and thus 
	\begin{equation}\label{Gampri}
		C_{d_1+d_2}< C_{d_1}\cdot C_{d_2}.
	\end{equation}
	
	It was first obtained by H. Weyl (\cite{W})  that both eigenvalue counting functions $\mathcal{N}^D_\Omega(\lambda)$ and $\mathcal{N}^N_\Omega(\lambda)$ have the same leading asymptotics 
	\begin{equation}\label{Weyl001} 
		\mathcal{N}^{D/N}_\Omega(\lambda) = C_d |\Omega| \lambda^{\frac d2}+o(\lambda^{\frac{d}2})
	\end{equation}
	as $\lambda \to \infty$, and the famous Weyl's conjecture, proven by V. Ivrii (\cite{Iv}) and R. Melrose (\cite{Mel}) under extra assumptions on the behavior of billiard dynamics, claims that for $\Omega \subset \mathbb R^d$ with piecewise smooth boundary, 
	\[\mathcal{N}^{D}_\Omega(\lambda) = C_d |\Omega| \lambda^{\frac d2} - \frac{1}{4} C_{d-1}|\partial \Omega|\lambda^{\frac{d-1}2}+o(\lambda^{\frac{d-1}2})\]
	while
	\[\mathcal{N}^{N}_\Omega(\lambda) = C_d |\Omega| \lambda^{\frac d2}+ \frac{1}{4} C_{d-1}|\partial \Omega|\lambda^{\frac{d-1}2}+o(\lambda^{\frac{d-1}2}),\]
	where $|\partial \Omega|$ is the surface area of $\partial \Omega$. 
	
	Although Weyl's conjecture was not proven in its full generality, R. Seeley  (\cite{S78}, \cite{S}) proved a weaker version, namely both eigenvalue counting functions satisfy  
	\begin{equation}\label{SeeleyDN}
		\mathcal N^{D/N}_\Omega(\lambda)= C_d|\Omega|\lambda^{\frac{d}{2}}+ \mathrm O(\lambda^{\frac{d-1}{2}}), \qquad \text{as\ } \lambda \to \infty,
	\end{equation}
	for all  bounded domains in $\mathbb{R}^d$ with piecewise smooth boundary. 
	In view of the facts $\lambda_1(\Omega)>0$ and $\mu_0(\Omega)=0$, we see that there exists a positive constant $C(\Omega)$ such that for any $\lambda>0$,  
	\begin{equation}\label{2termweyl}
		\mathcal N^D_\Omega(\lambda)\le C_d|\Omega|\lambda^{\frac{d}{2}}+ C(\Omega) \lambda^{\frac{d-1}{2}}
	\end{equation}
	and 		
	\begin{equation}\label{2termweylN}
		\mathcal N^N_\Omega(\lambda)\ge C_d|\Omega|\lambda^{\frac{d}{2}}- C(\Omega) \lambda^{\frac{d-1}{2}}.
	\end{equation}
	These two-term inequalities sharpen Weyl's leading estimates and will play a crucial role below. 
	
	We also need two-term inequalities for the Riesz mean that sharpen Laptev's inequalities \eqref{RieSumIn} and \eqref{RieSumInN}.
	For the Dirichlet case, 
	R. Frank and S. Larson (\cite[Theorem 1.1]{FL24Lip}) proved that for any bounded domain $\Omega$ in $\mathbb R^d$ ($d\ge 2$) with Lipschitz boundary and any $\gamma> 0$, 
	\[
	\begin{aligned}
	\sum_{\lambda_k(\Omega)< \lambda} (\lambda-\lambda_k(\Omega))^\gamma= L_{\gamma,d}|\Omega| \lambda^{\gamma+\frac{d}{2}}-\frac{1}{4} L_{\gamma,d-1} |\partial \Omega| \lambda^{\gamma+\frac{d-1}{2}}+ \mathrm o(\lambda^{\gamma+\frac{d-1}{2}}),\\
	\sum_{\mu_k(\Omega)< \lambda} (\lambda-\mu_k(\Omega))^\gamma= L_{\gamma,d}|\Omega| \lambda^{\gamma+\frac{d}{2}}+ \frac{1}{4} L_{\gamma,d-1} |\partial \Omega| \lambda^{\gamma+\frac{d-1}{2}}+ \mathrm o(\lambda^{\gamma+\frac{d-1}{2}}),
	\end{aligned}
	\]
	as $\lambda \to \infty$.  As a consequence, for fixed $\gamma$, there exists a positive constant $C_1(\Omega)$ such that if $\lambda > C_1(\Omega)$, one has
	\begin{equation}\label{RieszDiri}
		\sum_{\lambda_k(\Omega)< \lambda} (\lambda-\lambda_k(\Omega))^\gamma\le L_{\gamma,d}|\Omega| \lambda^{\gamma+\frac{d}{2}}-\frac{1}{5} L_{\gamma,d-1} |\partial \Omega| \lambda^{\gamma+\frac{d-1}{2}},
	\end{equation}
	and
	\begin{equation} \label{RiezsNeu}
		\sum_{\mu_k(\Omega)< \lambda} (\lambda-\mu_k(\Omega))^\gamma\ge L_{\gamma,d}|\Omega| \lambda^{\gamma+\frac{d}{2}}+ \frac{1}{5} L_{\gamma,d-1} |\partial \Omega| \lambda^{\gamma+\frac{d-1}{2}}.
	\end{equation}
	
\begin{remark}\label{rkconvex}
	If $\Omega$ is convex, R. Frank and S. Larson (\cite[Theorem 1.2]{FL24Lip}) provided a uniform, non-asymptotic bound that depends on $\Omega$ only through the simple geometric characteristics. Specifically, assuming $\gamma\ge 1$ for simplicity, they proved:
	\[
	\begin{aligned}
	 &|\sum_{\lambda_k(\Omega)< \lambda} (\lambda-\lambda_k(\Omega))^\gamma- L_{\gamma,d}|\Omega| \lambda^{\gamma+\frac{d}{2}}+\frac{1}{4} L_{\gamma,d-1} |\partial \Omega| \lambda^{\gamma+\frac{d-1}{2}}|\\
	 \le&C(\gamma,d) |\partial \Omega|\lambda^{\gamma+\frac{d-1}{2}}(r_{\mathrm{in}}(\Omega)\sqrt\lambda)^{-\frac 1{11}}
	\end{aligned}
	\]
	and 
	\[
	\begin{aligned}
     &|\sum_{\mu_k(\Omega)< \lambda} (\lambda-\mu_k(\Omega))^\gamma- L_{\gamma,d}|\Omega| \lambda^{\gamma+\frac{d}{2}}- \frac{1}{4} L_{\gamma,d-1} |\partial \Omega| \lambda^{\gamma+\frac{d-1}{2}}|\\
     \le& C(\gamma,d)|\partial \Omega|\lambda^{\gamma+\frac{d-1}{2}}[(1+\ln_+(r_{\mathrm{in}}(\Omega)\sqrt\lambda))^{-\gamma}+(r_{\mathrm{in}}(\Omega)\sqrt\lambda)^{1-d}]
	\end{aligned}
	\]
	where $r_{\mathrm{in}}(\Omega)$ denotes the inradius of $\Omega$.
	By the above inequalities, $C_1(\Omega)$ can be chosen as a constant depending only on  $r_{\mathrm{in}}(\Omega)$, $d$ and $\gamma$.
\end{remark}
	
	A third ingredient  is a sharpened version of Laptev's inequality \eqref{RieSumIn} and \eqref{RieSumInN}, which is needed for us to handle eigenvalues that are neither very large nor very small.  By carefully analyzing Laptev's proof in \cite{AL}, it is not hard to show that both inequalities are strict. Although this observation is enough to prove our theorem, it would be better to use an improved version  so that one can say more on the constant in our theorem. In fact  improvements of various forms have been obtained by many authors, see e.g. \cite{Mela}, \cite{GLW11}, \cite{HPS21}, \cite{Lar17}, \cite{LT06}, \cite{Wei08}. What we will use below is the following  quantitative improvements of both inequalities obtained recently by   R. Frank and S. Larson  \cite{FL24convex}, if $\gamma\ge 1$,
	\begin{equation}\label{impineqDiri}
		 \sum_{\lambda_k(\Omega)< \lambda} (\lambda-\lambda_k(\Omega))^\gamma\le L_{\gamma,d}|\Omega| \lambda^{\gamma+\frac{d}{2}}(1-c\exp(-c'\omega(\Omega)\sqrt\lambda))
	\end{equation}
	and
	\begin{equation}\label{impineqNeu}
		\sum_{\mu_k(\Omega)< \lambda} (\lambda-\mu_k(\Omega))^\gamma \ge L_{\gamma,d}|\Omega| \lambda^{\gamma+\frac{d}{2}}(1+c\exp(-c'\omega(\Omega)\sqrt\lambda))
	\end{equation}
	where $c$, $c'$ are two uniform constants and $\omega(\Omega)$ is the width of $\Omega$. For instance, by \eqref{impineqDiri} and \eqref{impineqNeu}, one gets
	\[
	\begin{aligned}
	 \frac{L_{\gamma,d} |\Omega| \lambda^{\frac{d}{2}+\gamma}- \sum_{\lambda_k(\Omega)<\lambda} (\lambda- \lambda_k(\Omega))^{\gamma }}{\lambda^{\frac{d-1}{2}+\gamma}}\ge L_{\gamma,d} |\Omega|  c  \exp(-c' \omega(\Omega)\sqrt{\lambda}) \sqrt{\lambda},\\
	 \frac{\sum_{\mu_k(\Omega)< \lambda} (\lambda-\mu_k(\Omega))^\gamma - L_{\gamma,d}|\Omega| \lambda^{\gamma+\frac{d}{2}}}{\lambda^{\frac{d-1}{2}+\gamma}}\ge L_{\gamma,d} |\Omega|  c  \exp(-c' \omega(\Omega)\sqrt{\lambda}) \sqrt{\lambda}.
	 \end{aligned}
	\]
	As a result, if we write $f(x)=L_{\gamma,d} |\Omega|  c  \exp(-c' \omega(\Omega)\sqrt{x}) \sqrt{x}$, then 
\begin{equation}\label{KAB}
		\begin{aligned}
\inf_{A\le \lambda \le B}	\frac{L_{\gamma,d} |\Omega| \lambda^{\frac{d}{2}+\gamma}- \sum_{\lambda_k(\Omega)<\lambda} (\lambda- \lambda_k(\Omega))^{\gamma }}{\lambda^{\frac{d-1}{2}+\gamma}} \ge \inf\{f(A), f(B)\},\\
\inf_{A\le \lambda \le B}	\frac{\sum_{\mu_k(\Omega)< \lambda} (\lambda-\mu_k(\Omega))^\gamma - L_{\gamma,d}|\Omega| \lambda^{\gamma+\frac{d}{2}}}{\lambda^{\frac{d-1}{2}+\gamma}} \ge \inf\{f(A), f(B)\}.
        \end{aligned}
\end{equation}

	\section{Proof of Theorem \ref{mthm2}} \label{pfmthm2}
	
	As observed by P. Freitas, J. Lagace and J. Payette  in \cite[Proposition 3.1]{FLP}, it is enough to assume that both $\Omega_1$ and $\Omega_2$ are connected. We divide the proof  of Theorem \ref{mthm2} into three parts: the Dirichlet case with $d_2 \ge 3$, the Dirichlet case with  $d_2= 2$, and the Neumann case. 
	
	For the Dirichlet case,  the eigenvalues of $a\Omega_1 \times \Omega_2$ are 
	\[
	a^{-2}\lambda_l(\Omega_1)+ \lambda_k(\Omega_2), \qquad \forall l,k\in \mathbb Z_{>0} 
	\]
	and thus 
	\[
	\mathcal N^D_{a\Omega_1 \times \Omega_2}(\lambda)= \sum_{l=1}^{Z^\lambda_a} \mathcal N^D_{\Omega_2}(\lambda- a^{-2}\lambda_l(\Omega_1)), 
	\]
	where
	\[
	Z^\lambda_a= \mathcal N^D_{\Omega_1} (a^2\lambda).
	\]
	By inequality (\ref{2termweyl}), there exists a constant $C(\Omega_2)>0$ such that
	\[
	\mathcal N^D_{\Omega_2} (\lambda)\leq C_{d_2} |\Omega_2| \lambda^{\frac{d_2}{2}}+ C(\Omega_2) \lambda^{\frac{d_2-1}{2}}, \qquad \forall \lambda>0.
	\]
	So we get 
	\begin{equation}\label{proddim2}
		\begin{aligned}
			&\sum_{l=1}^{Z^\lambda_a} \mathcal N^D_{\Omega_2} (\lambda- a^{-2}\lambda_l(\Omega_1))\\
			\le&C_{d_2} |\Omega_2| \sum_{l=1}^{Z^\lambda_a} (\lambda- a^{-2}\lambda_l(\Omega_1))^{\frac{d_2}{2}}+ C(\Omega_2) \sum_{l=1}^{Z^\lambda_a} (\lambda- a^{-2}\lambda_l(\Omega_1))^{\frac{d_2-1}{2}}\\
			=  &C_{d_2}|\Omega_2| a^{-d_2} \sum_{l=1}^{Z^\lambda_a} (a^2\lambda- \lambda_l(\Omega_1))^{\frac{d_2}{2}}+ C(\Omega_2) a^{1-d_2}\sum_{l=1}^{Z^\lambda_a} (a^2\lambda- \lambda_l(\Omega_1))^{\frac{d_2-1}{2}}.
		\end{aligned}
	\end{equation}
	By inequality (\ref{RieszDiri}), there exists a constant $C_1(\Omega_1)>0$ such that if $a^2\lambda> C_1(\Omega_1)$, then 
	\begin{equation}\label{FLinequ}
		\sum_{l=1}^{Z^\lambda_a} (a^2\lambda- \lambda_l(\Omega_1))^{\frac{d_2}{2}}\le L_{\frac{d_2}{2},d_1} |\Omega_1| a^{d_1+d_2} \lambda^{\frac{d_1+d_2}{2}}- \frac{1}{5} L_{\frac{d_2}{2}, d_1-1} |\partial \Omega_1| a^{d_1+d_2-1} \lambda^{\frac{d_1+d_2-1}{2}}.
	\end{equation}
	
	\subsection{The Dirichlet case with $d_2 \ge 3$}\label{subsec23}
	
	By \eqref{RieSumIn}, one has
	\begin{equation}\label{redim3} 
		\sum_{l=1}^{Z^\lambda_a} (a^2\lambda- \lambda_l(\Omega_1))^{\frac{d_2-1}{2}}\le L_{\frac{d_2-1}{2},d_1} |\Omega_1| a^{d_1+d_2-1} \lambda^{\frac{d_1+d_2-1}{2}}.
	\end{equation}
	So by (\ref{proddim2}), (\ref{FLinequ}), (\ref{redim3}) and the fact 
	\[C_{d_2} L_{\frac{d_2}{2},d_1} = C_{d_1+d_2},\]
	one has that if $a^2\lambda> C_1(\Omega_1)$, then
	\[
	\begin{aligned}
		&\sum_{l=1}^{Z^\lambda_a} \mathcal N^D_{\Omega_2} (\lambda- a^{-2}\lambda_l(\Omega_1))\\
		\le&C_{d_1+d_2}a^{d_1}|\Omega_1||\Omega_2|\lambda^{\frac{d_1+d_2}{2}}-\frac{1}{5} C_{d_1+d_2-1} |\partial \Omega_1| |\Omega_2|a^{d_1-1}\lambda^{\frac{d_1+d_2-1}{2}}\\
		&+ L_{\frac{d_2-1}{2}, d_1}|\Omega_1| C(\Omega_2) a^{d_1}\lambda^{\frac{d_1+d_2-1}{2}}.
	\end{aligned}
	\]
	Thus if we assume
	\[a< \frac{C_{d_1+d_2-1} |\partial \Omega_1| |\Omega_2|}{5 L_{\frac{d_2-1}{2}, d_1}|\Omega_1| C(\Omega_2) } = \frac{C_{d_2-1} |\partial \Omega_1| |\Omega_2|}{5 |\Omega_1| C(\Omega_2) },\] 
	then for any $\lambda >  a^{-2}C_1(\Omega_1)$, we will get the demanded inequality   
	\[
	\mathcal N^D_{a\Omega_1 \times \Omega_2}(\lambda)\le C_{d_1+d_2}a^{d_1}|\Omega_1||\Omega_2|\lambda^{\frac{d_1+d_2}{2}}.
	\]
	
	Note that if $0< \lambda< a^{-2} \lambda_1(\Omega_1)$, then we automatically have 
	\[
	\mathcal N^D_{a\Omega_1\times \Omega_2}(\lambda)=0< C_{d_1+d_2}a^{d_1}|\Omega_1||\Omega_2|\lambda^{\frac{d_1+d_2}{2}}.
	\] 
	So it remains to consider the case $a^{-2}\lambda_1(\Omega_1)\le \lambda\le a^{-2} C_1(\Omega_1)$ assuming $C_1(\Omega_1)> \lambda_1(\Omega_1)$. 
	Let $\mu= a^2\lambda$, then by \eqref{impineqDiri},
	one has
	\[
	K(\Omega_1)= \inf_{\lambda_1(\Omega_1)\le \mu\le C_1(\Omega_1)} \frac{L_{\frac{d_2}{2},d_1} |\Omega_1| \mu^{\frac{d_1+d_2}{2}}- \sum_{\lambda_l(\Omega_1)<\mu} (\mu- \lambda_l(\Omega_1))^{\frac{d_2}{2}}}{\mu^{\frac{d_1+d_2-1}{2}}}> 0
	\]
	and thus 
	\begin{equation}\label{mu-Diri}
		\sum_{\lambda_l(\Omega_1)<\mu} (\mu- \lambda_l(\Omega_1))^{\frac{d_2}{2}}\le L_{\frac{d_2}{2},d_1} |\Omega_1| \mu^{\frac{d_1+d_2}{2}}- K(\Omega_1) \mu^{\frac{d_1+d_2-1}{2}} 
	\end{equation}
	for all $\lambda_1(\Omega_1)\le \mu\le C_1(\Omega_1)$. Thus by (\ref{proddim2}), (\ref{redim3}) and (\ref{mu-Diri}), one has that if $a^{-2}\lambda_1(\Omega_1)\le \lambda\le a^{-2} C_1(\Omega_1)$, then
	\[
	\begin{aligned}
		&\sum_{l=1}^{Z^\lambda_a} \mathcal N^D_{\Omega_2} (\lambda- a^{-2}\lambda_l(\Omega_1))\\
		\le&C_{d_1+d_2}a^{d_1}|\Omega_1||\Omega_2|\lambda^{\frac{d_1+d_2}{2}}-K(\Omega_1)C_{d_2}|\Omega_2|a^{d_1-1}\lambda^{\frac{d_1+d_2-1}{2}}\\
		&+ L_{\frac{d_2-1}{2}, d_1}|\Omega_1| C(\Omega_2) a^{d_1}\lambda^{\frac{d_1+d_2-1}{2}}.
	\end{aligned}
	\]
	So if we assume $a< \frac{K(\Omega_1)C_{d_2}|\Omega_2|}{L_{\frac{d_2-1}{2}, d_1}|\Omega_1| C(\Omega_2)}$, then for any $a^{-2}\lambda_1(\Omega_1)\le \lambda\le a^{-2} C_1(\Omega_1)$,   
	\[
	\mathcal N^D_{a\Omega_1 \times \Omega_2}(\lambda)\le C_{d_1+d_2}a^{d_1}|\Omega_1||\Omega_2|\lambda^{\frac{d_1+d_2}{2}}.
	\]
	Combining  all discussions above, one has that if 
	\[
	a< a_0^D(\Omega_1, \Omega_2) = \min\bigg(\frac{C_{d_2-1} |\partial \Omega_1| |\Omega_2|}{5 |\Omega_1| C(\Omega_2) }, \frac{K(\Omega_1)C_{d_2}|\Omega_2|}{L_{\frac{d_2-1}{2}, d_1}|\Omega_1| C(\Omega_2)}\bigg),
	\]
	then all Dirichlet eigenvalues of $a\Omega_1\times \Omega_2$ satisfy P\'olya's conjecture \eqref{PolConjD}. 
	
	\begin{remark}\label{explicityconsforconv}
		By \eqref{KAB}, one can express $K(\Omega_1)$ via $\lambda_1(\Omega_1)$ and $C_1(\Omega_1)$. In particular, if $\Omega_1$ is convex, then by Remark \ref{rkconvex}, one gets a formula of $C_1(\Omega_1)$ via $r_{\mathrm {in}}(\Omega_1)$ and $d_1, d_2$. Together with the classical Faber-Krahn inequality 
		\[
		\lambda_1(\Omega_1)\ge \big(\frac{\omega_{d_1}}{|\Omega_1|}\big)^{\frac 2{d_1}}\lambda_1(B^{d_1}),
		\]
	we may write down an explicit formula for  $K(\Omega_1)$ in terms of $|\Omega_1|$, $r_{\mathrm {in}}(\Omega_1)$, $d_1$ and $d_2$ (here we used the fact that for convex domains, $\omega(\Omega_1)$ can be controlled by $r_{\mathrm {in}}(\Omega_1)$, see \cite[Theorem 10.12.2]{Sch14}).  As a result, the value of $a_0^D(\Omega_1, \Omega_2)$ can be explicitly determined if the value of  $C(\Omega_2)$ is known (See \S \ref{twoex} for a class of such domains). 
	\end{remark}
	
	\subsection{The Dirichlet case with  $d_2= 2$}	\label{subsec22}
	
	Since $C_2=\frac 1{4\pi}$, the inequality (\ref{proddim2}) becomes
	\begin{equation}\label{Diri2d}
		\begin{aligned}
			&\sum_{l=1}^{Z^\lambda_a} \mathcal N^D_{\Omega_2} (\lambda- a^{-2}\lambda_l(\Omega_1))\\
			\le&(4\pi)^{-1}|\Omega_2| a^{-2} \sum_{l=1}^{Z^\lambda_a} (a^2\lambda- \lambda_l(\Omega_1))+ C(\Omega_2) a^{-1}\sum_{l=1}^{Z^\lambda_a} (a^2\lambda- \lambda_l(\Omega_1))^{\frac{1}{2}} 
		\end{aligned}
	\end{equation}
	and the inequality (\ref{FLinequ}) gives, for $a^2\lambda > C_1(\Omega_1)$,
	\begin{equation}\label{Diri2dRi}
		\sum_{l=1}^{Z^\lambda_a} (a^2\lambda- \lambda_l(\Omega_1)) \le L_{1,d_1} |\Omega_1| a^{d_1+2} \lambda^{\frac{d_1+2}{2}}- \frac{1}{5}L_{1,d_1-1}|\partial \Omega_1| a^{d_1+1} \lambda^{\frac{d_1+1}{2}}.
	\end{equation}
	To estimate the second term in \eqref{Diri2d}, we use Li-Yau's lower bound (\ref{LYineq2}), namely 
	\[
	\lambda_l(\Omega_1)\ge \frac{d_1}{d_1+2} l^{\frac{2}{d_1}} (C_{d_1} |\Omega_1|)^{-\frac{2}{d_1}},
	\]
	to get  
	\begin{equation}\label{star}
		\begin{aligned}
			\sum_{l=1}^{Z^\lambda_a} (a^2\lambda- \lambda_l(\Omega_1))^{\frac{1}{2}}
			\le& \sum_{l=1}^{Z^\lambda_a} \big(a^2\lambda- \frac{d_1}{d_1+2} l^{\frac{2}{d_1}} (C_{d_1} |\Omega_1|)^{-\frac{2}{d_1}}\big)^{\frac{1}{2}}\\
			\le& \int_0^\infty \big(a^2\lambda- \frac{d_1}{d_1+2}x^{\frac{2}{d_1}}(C_{d_1}|\Omega_1|)^{-\frac{2}{d_1}} \big)_{+}^{\frac{1}{2}} \mathrm{d}x\\
			=& (\frac{d_1+2}{d_1})^{\frac{d_1}{2}} C_{d_1}|\Omega_1|\frac{d_1}{2}a^{d_1+1}\lambda^{\frac{d_1+1}{2}} \int_0^1 (1-s)^{\frac{1}{2}} s^{\frac{d_1}{2}-1} \mathrm{d}s\\
			=& (\frac{d_1+2}{d_1})^{\frac{d_1}{2}} L_{\frac{1}{2}, d_1}|\Omega_1| a^{d_1+1}\lambda^{\frac{d_1+1}{2}}.
		\end{aligned}		
	\end{equation}
	Then by (\ref{Diri2d}), (\ref{Diri2dRi}) and (\ref{star}), one has that if $a^2\lambda>  C_1(\Omega_1)$, then
	\[
	\begin{aligned}
		&\sum_{l=1}^{Z^\lambda_a} \mathcal N^D_{\Omega_2} (\lambda- a^{-2}\lambda_l(\Omega_1))\\
		\le&C_{d_1+2} a^{d_1} |\Omega_1||\Omega_2|\lambda^{\frac{d_1+2}{2}}- \frac 15 C_{d_1+1} |\Omega_2|  |\partial \Omega_1| a^{d_1-1}\lambda^{\frac{d_1+1}{2}}+\\
		&(\frac{d_1+2}{d_1})^{\frac{d_1}{2}} C(\Omega_2) L_{\frac{1}{2}, d_1}|\Omega_1| a^{d_1}\lambda^{\frac{d_1+1}{2}}.
	\end{aligned}
	\]
	So if we assume 
	\[
	a< \frac{C_{d_1+1} |\Omega_2|  |\partial \Omega_1|}{5(\frac{d_1+2}{d_1})^{\frac{d_1}{2}} C(\Omega_2) L_{\frac{1}{2}, d_1}|\Omega_1|} =\frac{1}{5\pi}\big(\frac {d_1}{d_1+2}\big)^{\frac{d_1}2}\frac{|\Omega_2||\partial \Omega_1|}{C(\Omega_2)|\Omega_1|}, 
	\]
	then for any $\lambda> a^{-2}C_1(\Omega_1)$, one also gets the demanded inequality
	\[
	\mathcal N^D_{a\Omega_1 \times \Omega_2}(\lambda)\le C_{d_1+2} a^{d_1} |\Omega_1||\Omega_2|\lambda^{\frac{d_1+2}{2}}.
	\]
	
	For  $\lambda< a^{-2}C_1(\Omega_1)$, one just repeat the corresponding part of the  proof of the Dirichlet case with $d_2\ge 3$, so we omit it. 
	
	\subsection{The Neumann case}	\label{Neu3}
	
	Since the Neumann eigenvalues of $a\Omega_1 \times \Omega_2$ are 
	\[
	a^{-2}\mu_l(\Omega_1)+ \mu_k(\Omega_2), \qquad \forall l,k\in \mathbb Z_{\ge 0},
	\]
	one has
	\[
	\mathcal N^N_{a\Omega_1\times \Omega_2}(\lambda)= \sum_{l=0}^{Y_a^\lambda} \mathcal N^N_{\Omega_2} (\lambda- a^{-2}\mu_l(\Omega_1))
	\]
	where
	\[
	Y_a^\lambda= \mathcal N^N_{\Omega_1}(a^2\lambda)-1.
	\]	
	By inequality (\ref{2termweylN}), there exists a constant $C_1(\Omega_2)>0$ such that
	\[
	\mathcal N^N_{\Omega_2}(\lambda)\ge C_{d_2}|\Omega_2|\lambda^{\frac{d_2}{2}}- C_1(\Omega_2)\lambda^{\frac{d_2-1}{2}}, \qquad \forall \lambda>0.
	\]
	So we get
	\begin{equation}\label{gogogo}
		\begin{aligned}
			&\sum_{l=0}^{Y_a^\lambda} \mathcal N^N_{\Omega_2} (\lambda- a^{-2}\mu_l(\Omega_1))\\
			\ge&C_{d_2}|\Omega_2| \sum_{l=0}^{Y_a^\lambda} (\lambda- a^{-2}\mu_l(\Omega_1))^{\frac{d_2}{2}}- C_1(\Omega_2) \sum_{l=0}^{Y_a^\lambda} (\lambda- a^{-2}\mu_l(\Omega_1))^{\frac{d_2-1}{2}}\\
			=  &C_{d_2}|\Omega_2|a^{-d_2} \sum_{l=0}^{Y_a^\lambda} (a^2\lambda- \mu_l(\Omega_1))^{\frac{d_2}{2}}- C_1(\Omega_2)a^{1-d_2} \sum_{l=0}^{Y_a^\lambda} (a^2\lambda- \mu_l(\Omega_1))^{\frac{d_2-1}{2}}.
		\end{aligned}
	\end{equation}
	For the first term, by  (\ref{RiezsNeu}), there exists a constant $C_1(\Omega_1)>0$ such that if $a^2\lambda> C_1(\Omega_1)$, then
	\begin{equation}\label{gogogo1} 
		\sum_{l=0}^{Y_a^\lambda}(a^2\lambda- \mu_l(\Omega_1))^{\frac{d_2}{2}}\ge L_{\frac{d_2}{2},d_1}|\Omega_1|a^{d_1+d_2}\lambda^{\frac{d_1+d_2}{2}}+ \frac{1}{5} L_{\frac{d_2}{2}, d_1-1} |\partial \Omega_1|a^{d_1+d_2-1}\lambda^{\frac{d_1+d_2-1}{2}}.
	\end{equation}
	To estimate the second term, we use \eqref{Weyl001} to get $L=L(\Omega_1)>0$ such that 
	\[
	\mu_l(\Omega_1)\ge \frac{1}{2} l^{\frac{2}{d_1}} (C_{d_1}|\Omega_1|)^{-\frac{2}{d_1}}, \text{ if }l\ge L.
	\]
	Note that if $a^2\lambda$ is large enough, one has	
	\[
	\sum_{l=L}^{3L-1}\big(a^2\lambda- \frac{1}{2} l^{\frac{2}{d_1}}(C_{d_1}|\Omega_1|)^{-\frac{2}{d_1}} \big)^{\frac{d_2-1}{2}} 
	\ge  L (a^2\lambda)^{\frac{d_2-1}{2}}\ge \sum_{l=0}^{L-1} (a^2\lambda- \mu_l(\Omega_1))^{\frac{d_2-1}{2}}. 
	\] 
	So there exists a constant $C_2(\Omega_1)> 0$ such that if $a^2\lambda> C_2(\Omega_1)$, then
	\begin{equation}\label{gogogo2}
		\begin{aligned}
			&\sum_{l=0}^{Y_a^\lambda}(a^2\lambda- \mu_l(\Omega_1))^{\frac{d_2-1}{2}}\\
			\le&2\sum_{l=0}^{Y_a^\lambda}\big(a^2\lambda- \frac{1}{2} l^{\frac{2}{d_1}}(C_{d_1}|\Omega_1|)^{-\frac{2}{d_1}} \big)^{\frac{d_2-1}{2}}\\
			=  &2(a^2\lambda)^{\frac{d_2-1}{2}}+ 2\sum_{l=1}^{Y_a^\lambda}\big(a^2\lambda- \frac{1}{2} l^{\frac{2}{d_1}}(C_{d_1}|\Omega_1|)^{-\frac{2}{d_1}} \big)^{\frac{d_2-1}{2}}\\
			\le&4\int_0^\infty \big(a^2\lambda- \frac{1}{2}x^{\frac{2}{d_1}}(C_{d_1}|\Omega_1|)^{-\frac{2}{d_1}} \big)_+^{\frac{d_2-1}{2}} \mathrm{d}x\\
			=  & 2^{\frac{d_1}{2}+2} C_{d_1}|\Omega_1|\frac{d_1}{2}a^{d_1+d_2-1}\lambda^{\frac{d_1+d_2-1}{2}} \int_0^1 (1-s)^{\frac{d_2-1}{2}} s^{\frac{d_1}{2}-1} \mathrm{d}s\\
			=  & 2^{\frac{d_1}{2}+1} C_{d_1} B(\frac{d_1}{2}, \frac{d_2+1}{2})|\Omega_1|d_1 a^{d_1+d_2-1}\lambda^{\frac{d_1+d_2-1}{2}}.
		\end{aligned}
	\end{equation}
	Thus by (\ref{gogogo}), (\ref{gogogo1}) and (\ref{gogogo2}), one has that if $a^2\lambda> \max(C_1(\Omega_1), C_2(\Omega_1))$, then
	\[
	\begin{aligned}
		&\sum_{l=0}^{Y^\lambda_a} \mathcal N^N_{\Omega_2} (\lambda- a^{-2}\mu_l(\Omega_1)) \\
		\ge&C_{d_1+d_2}a^{d_1}|\Omega_1||\Omega_2|\lambda^{\frac{d_1+d_2}{2}}+ \frac{1}{5} C_{d_1+d_2-1} |\Omega_2||\partial \Omega_1| a^{d_1-1}\lambda^{\frac{d_1+d_2-1}{2}}- \\
		&C_1(\Omega_2) 2^{\frac{d_1}{2}+1} C_{d_1} B(\frac{d_1}{2}, \frac{d_2+1}{2})|\Omega_1|d_1 a^{d_1}\lambda^{\frac{d_1+d_2-1}{2}}.
	\end{aligned}
	\]
	So if we require  
	\[
	a< \frac{C_{d_1+d_2-1}|\Omega_2| |\partial \Omega_1|}{5 C_1(\Omega_2) 2^{\frac{d_1}{2}+1} C_{d_1} B(\frac{d_1}{2}, \frac{d_2+1}{2})|\Omega_1|d_1}= \frac{C_{d_2-1}|\Omega_2||\partial \Omega_1|}{5 \cdot 2^{\frac{d_1}{2}+2} C_1(\Omega_2)|\Omega_1|},
	\]
	then for any $\lambda> a^{-2}\max(C_1(\Omega_1), C_2(\Omega_1))$, one gets the demanded
	\[
	\mathcal N^N_{a\Omega_1\times \Omega_2}(\lambda)\ge C_{d_1+d_2}a^{d_1}|\Omega_1||\Omega_2|\lambda^{\frac{d_1+d_2}{2}}.
	\]
	
	Next, we consider   $0< \lambda< a^{-2} \mu_1(\Omega_1)$, in which case
	\[
	\mathcal N^N_{a\Omega_1\times \Omega_2}(\lambda)= \mathcal N^N_{\Omega_2}(\lambda).
	\]
	By Szeg\"{o}-Weinberger inequality (\cite{S54}, \cite{We}), one has
	\[
	\mu_1(\Omega_1)\le (C_{d_1}|\Omega_1|)^{-\frac{2}{d_1}}
	\]
	which implies that for $0< \lambda< a^{-2} \mu_1(\Omega_1)$, 
	\[
	\begin{aligned}
		C_{d_1+d_2}a^{d_1}|\Omega_1||\Omega_2|\lambda^{\frac{d_1+d_2}{2}}
		&< C_{d_1+d_2}a^{d_1}|\Omega_1||\Omega_2|(a^{-2} \mu_1(\Omega_1))^{\frac{d_1}{2}}\lambda^{\frac{d_2}{2}}\\
		&\le \frac{C_{d_1+d_2}}{C_{d_1}}|\Omega_2|\lambda^{\frac{d_2}{2}}.
	\end{aligned}
	\]
	On the other hand, 	by \eqref{Gampri}  one has $\frac{C_{d_1+d_2}}{C_{d_1}} < C_{d_2}$. So by \eqref{Weyl001}, there exists a constant $C_2(\Omega_2)>0$ such that for  $\lambda> C_2(\Omega_2)$,  
	\[
	\mathcal N^N_{\Omega_2}(\lambda)> \frac{C_{d_1+d_2}}{C_{d_1}}|\Omega_2|\lambda^{\frac{d_2}2}.
	\]
	Thus if $a^{-2} \mu_1(\Omega_1)> \lambda> C_2(\Omega_2)$, one gets
	\[
	\mathcal N^N_{a\Omega_1\times \Omega_2}(\lambda)= \mathcal N^N_{\Omega_2}(\lambda)> \frac{C_{d_1+d_2}}{C_{d_1}}|\Omega_2|\lambda^{\frac{d_2}{2}}\ge C_{d_1+d_2}a^{d_1}|\Omega_1||\Omega_2|\lambda^{\frac{d_1+d_2}{2}}.
	\]
	For $0< \lambda\le C_2(\Omega_2)$, we only need to require 
	\[
	a< \big(C_{d_1+d_2} |\Omega_1| |\Omega_2| C_2(\Omega_2)^{\frac{d_1+d_2}{2}}\big)^{-\frac{1}{d_1}}=: C(\Omega_1,\Omega_2),
	\]
	to get 
	\[
	C_{d_1+d_2}a^{d_1}|\Omega_1||\Omega_2|\lambda^{\frac{d_1+d_2}{2}} <1\le \mathcal N^N_{a\Omega_1\times \Omega_2}(\lambda).
	\]

	It remains to consider the case $a^{-2} \mu_1(\Omega_1)\le \lambda\le a^{-2}\max(C_1(\Omega_1), C_2(\Omega_1))$ assuming $\max(C_1(\Omega_1), C_2(\Omega_1))> \mu_1(\Omega_1)$.
	Let $\mu= a^2\lambda$, then by \eqref{impineqNeu}, one has
	\begin{equation} \label{K1Neu}
		K_1(\Omega_1)= \inf_{\mu_1(\Omega_1)\le \mu\le \max(C_1(\Omega_1), C_2(\Omega_1))}\frac{\underset{\mu_l(\Omega_1)<\mu}{\sum}(\mu- \mu_l(\Omega_1))^{\frac{d_2}{2}}- L_{\frac{d_2}{2},d_1} |\Omega_1| \mu^{\frac{d_1+d_2}{2}}}{\mu^{\frac{d_1+d_2-1}{2}}}>0.
	\end{equation} 
	Let 
	\begin{equation} \label{K2Neu}
		K_2(\Omega_1):= \sup_{\mu_1(\Omega_1)\le \mu\le \max(C_1(\Omega_1), C_2(\Omega_1))} \frac{\underset{\mu_l(\Omega_1)<\mu}{\sum}(\mu- \mu_l(\Omega_1))^{\frac{d_2-1}{2}}}{\mu^{\frac{d_1+d_2-1}{2}}}>0.
	\end{equation}
	Then by (\ref{gogogo}) (\ref{K1Neu}) and (\ref{K2Neu}), for $a^{-2} \mu_1(\Omega_1)\le \lambda\le a^{-2}\max(C_1(\Omega_1), C_2(\Omega_1))$ one has 
	\[
	\begin{aligned}
		&\sum_{l=0}^{Y_a^\lambda} \mathcal N^N_{\Omega_2} (\lambda- a^{-2}\mu_l(\Omega_1))\\
		\ge&C_{d_1+d_2}a^{d_1}|\Omega_1||\Omega_2|\lambda^{\frac{d_1+d_2}{2}}+ C_{d_2}|\Omega_2| K_1(\Omega_1) a^{d_1-1} \lambda^{\frac{d_1+d_2-1}{2}}-\\
		&C_1(\Omega_2)K_2(\Omega_1) a^{d_1}\lambda^{\frac{d_1+d_2-1}{2}}.
	\end{aligned}
	\]
	Thus if we assume
	\[
	a< \frac{C_{d_2}|\Omega_2| K_1(\Omega_1)}{C_1(\Omega_2)K_2(\Omega_1)}
	\]
	then for any $a^{-2} \mu_1(\Omega_1)\le \lambda\le a^{-2}\max(C_1(\Omega_1), C_2(\Omega_1))$, one gets the demanded inequality
	\[
	\mathcal N^N_{a\Omega_1\times \Omega_2}(\lambda)\ge C_{d_1+d_2}a^{d_1}|\Omega_1||\Omega_2|\lambda^{\frac{d_1+d_2}{2}}.
	\]

	Thus we conclude that for 
	\[
	\begin{aligned}
		a< \min\bigg(\frac{C_{d_2-1}|\Omega_2||\partial \Omega_1|}{5 \cdot 2^{\frac{d_1}{2}+2} C_1(\Omega_2)|\Omega_1|}, \frac{C_{d_2}|\Omega_2| K_1(\Omega_1)}{C_1(\Omega_2)K_2(\Omega_1)}, C(\Omega_1,\Omega_2)\bigg),
	\end{aligned}
	\]
	all Neumann eigenvalues of $a\Omega_1\times \Omega_2$ satisfies P\'olya's conjecture \eqref{PolConjN}.  So we complete the proof of Theorem \ref{mthm2}.  $\hfill\square$
	
	\begin{remark}
		\label{ConstforconvN}
		As in the Dirichlet case, the dependence of the constant on $\Omega_1$ can be made explicit. In fact, if $\Omega_1$ is convex, by \cite[Theorem 5.3]{LY86}, there exists a constant $C$ depending only on $d_1$, such that
		\begin{equation}\label{lowbdformul}
			\mu_l(\Omega_1)\ge \frac C{\diam(\Omega_1)^2} l^{\frac 2{d_1}}
		\end{equation} 
		for all $l$. As a result, if $\Omega_1$ is convex, there is no need to introduce $L(\Omega_1)$ and $C_2(\Omega_1)$ can be selected as a constant depending only on $\diam(\Omega_1)$  and $d_1$. The dependence of $K_1(\Omega_1)$ on $\Omega_1$ can be handled as in Remark \ref{explicityconsforconv}. For the dependence of $K_2(\Omega_1)$ on $\Omega_1$, one may apply Theorem 1.2 in \cite{FL24Lip} which indicates that there exists geometric constants $R_1$, $R_2$ (which depends only on $|\partial\Omega_1|$ and $r_{\mathrm{in}}(\Omega_1)$), such that 
		\[
		\underset{\mu_l(\Omega_1)<\mu}{\sum}(\mu- \mu_l(\Omega_1))^{\frac{d_2-1}{2}} \le L_{\frac{d_2-1}{2}, d_1} |\Omega_1|\mu^{\frac{d_1+d_2-1}{2}}+R_1 \mu^{\frac{d_1+d_2-2}{2}}+R_2\mu^{\frac{d_2-1}{2}}.
		\]
		Combine this with \eqref{lowbdformul}, one gets an explicit constant $K_2(\Omega_1)$ in terms of geometric information of $\Omega_1$.

	\end{remark}

	\section{Proof of Theorem \ref{mthm}}\label{pfmthm}
	
	\subsection{Two elementary lemmas}
	Before proving Theorem \ref{mthm}, we give two elementary lemmas that will play  important roles later. 
	
	\begin{lemma}\label{prioffun}
		Let $f_d(x)=(\lambda-\frac{x^2 \pi^2}{a^2})^{\frac{d}{2}}$, then
		\begin{enumerate}[$\mathrm{(1)}$]
			\item[$\mathrm{(1)}$] $f_d$ is decreasing on $(0,\frac{a \sqrt \lambda}{\pi})$.
			\item[$\mathrm{(2)}$] If $d\ge 3$, $f_d$ is concave on $(0, \sqrt{\frac{\lambda}{d-1}}\frac{a}{\pi})$ and is convex on $(\sqrt{\frac{\lambda}{d-1}}\frac{a}{\pi}, \frac{a \sqrt \lambda}{\pi})$.
			\item[$\mathrm{(3)}$] $\int_0^{\frac{a \sqrt \lambda}{\pi}} f_d(x) \mathrm{d}x= a\cdot\frac{C_{d+1}}{C_d}\lambda^{\frac{d+1}{2}}$.
		\end{enumerate}
	\end{lemma}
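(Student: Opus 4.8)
\textit{Proof strategy.}

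\emph{Part (1).} Write $f_d(x) = g(x)^{d/2}$ with $g(x) = \lambda - \frac{\pi^2}{a^2}x^2$. On the interval $(0,\frac{a\sqrt\lambda}{\pi})$ we have $g(x)>0$ and $g'(x) = -\frac{2\pi^2}{a^2}x < 0$, so $g$ is strictly decreasing there; since $t \mapsto t^{d/2}$ is strictly increasing on $[0,\infty)$, the composition $f_d = g^{d/2}$ is strictly decreasing on $(0,\frac{a\sqrt\lambda}{\pi})$. This part requires no real computation.

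\emph{Part (2).} Here the plan is a direct second-derivative computation. With $c = \pi^2/a^2$ one gets $f_d'(x) = -dc\,x\,g(x)^{d/2-1}$ and then, after factoring out $g(x)^{d/2-2}$,
\[
f_d''(x) = -dc\,g(x)^{\frac d2 - 2}\big(\lambda - (d-1)c\,x^2\big).
\]
On $(0,\frac{a\sqrt\lambda}{\pi})$ one has $g(x)>0$, hence $g(x)^{d/2-2}>0$ for every $d$ (including $d=3$, where the exponent is negative but the base is positive), and $-dc<0$; therefore the sign of $f_d''$ is opposite to the sign of $\lambda - (d-1)c\,x^2$. Since $\lambda - (d-1)cx^2 > 0 \iff x < \frac a\pi\sqrt{\frac{\lambda}{d-1}}$, we conclude $f_d''<0$ on $(0,\frac a\pi\sqrt{\frac{\lambda}{d-1}})$ and $f_d''>0$ on $(\frac a\pi\sqrt{\frac{\lambda}{d-1}},\frac{a\sqrt\lambda}{\pi})$, which is exactly the claimed concavity/convexity dichotomy. (Note we only assert this on the open intervals, so the possible non-smoothness of $f_d$ at the endpoint $g=0$ is irrelevant.)

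\emph{Part (3).} The plan is to reduce to a one–variable standard integral. Substituting $\xi = \frac{\pi}{a}x$ gives
\[
\int_0^{\frac{a\sqrt\lambda}{\pi}} f_d(x)\,\mathrm dx = \frac a\pi\int_0^{\sqrt\lambda}(\lambda-\xi^2)^{\frac d2}\,\mathrm d\xi = \frac{a}{2\pi}\int_{\mathbb R}(\lambda-\xi^2)_+^{\frac d2}\,\mathrm d\xi,
\]
and the last integral equals $2\pi\,L_{\frac d2,1}\,\lambda^{\frac{d+1}{2}}$ — this is the one-dimensional instance of the identity $\int_{\mathbb R^n}(\lambda-|\xi|^2)_+^{\gamma}\,\mathrm d\xi = (2\pi)^n L_{\gamma,n}\lambda^{\gamma+n/2}$ already used (with $\gamma=1$, $n=d$) in the excerpt, or one may simply compute it via $\int_0^1(1-u^2)^{d/2}\,\mathrm du = \tfrac12 B(\tfrac12,\tfrac d2+1)$. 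It then remains to verify the constant identity $L_{\frac d2,1} = \frac{C_{d+1}}{C_d}$, which is immediate from the formulas $L_{\gamma,d} = \frac{\Gamma(\gamma+1)}{(4\pi)^{d/2}\Gamma(\gamma+1+d/2)}$ and $C_d = \frac{1}{(4\pi)^{d/2}\Gamma(d/2+1)}$: both sides equal $\frac{\Gamma(d/2+1)}{2\sqrt\pi\,\Gamma(d/2+3/2)}$. Combining, the integral equals $a\frac{C_{d+1}}{C_d}\lambda^{\frac{d+1}{2}}$, as claimed. The only mildly delicate point in the whole lemma is the $\Gamma$-function bookkeeping in this last step, and it is entirely routine.
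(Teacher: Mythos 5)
Your proof is correct and follows essentially the same route as the paper: (1) monotonicity by inspection, (2) the same second-derivative formula with the same sign analysis (the paper's expression $\frac{\pi^2 d}{a^2}(\lambda-\frac{x^2\pi^2}{a^2})^{\frac d2-2}((d-1)\frac{\pi^2 x^2}{a^2}-\lambda)$ is your $f_d'' = -dc\,g^{d/2-2}(\lambda-(d-1)cx^2)$ rewritten), and (3) reduction to a one-dimensional Beta-type integral plus $\Gamma$-bookkeeping. The only cosmetic difference is in (3): the paper substitutes $t=\cos\theta$ and quotes the Wallis integral $\int_0^{\pi/2}\cos^{d+1}\theta\,d\theta$, whereas you invoke $\int_0^1(1-u^2)^{d/2}\,du=\tfrac12 B(\tfrac12,\tfrac d2+1)$ (equivalently the $n=1$ case of the $L_{\gamma,n}$ integral identity) — the two are the same computation in different clothing.
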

	
	\begin{proof}
		(1) is trivial. (2) follows from 
		\[
		f_d''(x)= \frac{\pi^2 d}{a^2}(\lambda-\frac{x^2\pi^2}{a^2})^{\frac{d}{2}-2}((d-1)\frac{\pi^2 x^2}{a^2}-\lambda),
		\]
		and (3) is also elementary: 
		\[
		\begin{aligned}
			&\int_0^{\frac{a \sqrt \lambda}{\pi}} f_d(x) \mathrm{d}x= \lambda^{\frac{d+1}{2}} \frac{a}{\pi} \int_0^1 (1-t^2)^{\frac{d}{2}} \mathrm{d}t\\
			=&\lambda^{\frac{d+1}{2}} \frac{a}{\pi} \int_0^{\frac{\pi}{2}} (\cos\theta)^{d+1} \mathrm{d}\theta= \lambda^{\frac{d+1}{2}} \frac{a}{\pi}\cdot \frac{\Gamma(\frac{1}{2})\Gamma(\frac{d}{2}+1)}{2\Gamma(\frac{d+1}{2}+1)}= a\cdot\frac{C_{d+1}}{C_d}\lambda^{\frac{d+1}{2}}.
		\end{aligned}
		\]
	\end{proof}
	
	The second lemma is 
	\begin{lemma}
		Let
		\begin{equation}\label{MNlambda}
			M_a^\lambda= \lfloor{\frac{a\sqrt\lambda}{\pi}}\rfloor, 
		\end{equation}
		then for $\lambda \ge \frac{\pi^2}{a^2}$ (i.e. $M_a^\lambda \ge 1$), we have 
		\begin{equation}\label{summ2}
			\sum_{l=1}^{M_a^\lambda} (\lambda- \frac{l^2\pi^2}{a^2}) \le \frac{2a\lambda^{\frac{3}{2}}}{3\pi}-\frac{\lambda}{8}- \frac{\sqrt \lambda \pi}{12 a}
		\end{equation}
		and
		\begin{equation}\label{summ3}
			\sum_{l=0}^{M_a^\lambda} (\lambda-\frac{l^2\pi^2}{a^2}) \ge \frac{2a}{3\pi}\lambda^{\frac 32}+\frac 1{12}\lambda.
		\end{equation}
	\end{lemma}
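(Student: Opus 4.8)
The plan is to reduce both inequalities to elementary polynomial inequalities in one real variable. Set $M=M_a^\lambda$ and $t=\frac{a\sqrt\lambda}{\pi}$, so that $\lambda=\frac{\pi^2 t^2}{a^2}$ and $M=\lfloor t\rfloor$; the hypothesis $\lambda\ge\pi^2/a^2$ is exactly $t\ge 1$, which forces $M\ge 1$. The summand $\lambda-\frac{l^2\pi^2}{a^2}$ is $f_2(l)$ in the notation of Lemma \ref{prioffun}, a downward parabola, so the partial sums are given by a closed formula: using $\sum_{l=1}^M l^2=\frac{M^3}{3}+\frac{M^2}{2}+\frac{M}{6}$ one has the exact identity
\[
\sum_{l=1}^{M}\Big(\lambda-\frac{l^2\pi^2}{a^2}\Big)=\frac{\pi^2}{a^2}\Big(Mt^2-\frac{M^3}{3}-\frac{M^2}{2}-\frac{M}{6}\Big),
\]
and the sum over $0\le l\le M$ differs from this by $\lambda=\frac{\pi^2}{a^2}t^2$. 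Every right-hand side carries the same factor $\frac{\pi^2}{a^2}$: indeed $\frac{2a}{3\pi}\lambda^{3/2}=\frac{\pi^2}{a^2}\cdot\frac{2t^3}{3}$ (which is Lemma \ref{prioffun}(3) with $d=2$), $\frac{\lambda}{8}=\frac{\pi^2}{a^2}\cdot\frac{t^2}{8}$, $\frac{\sqrt\lambda\,\pi}{12a}=\frac{\pi^2}{a^2}\cdot\frac{t}{12}$, and $\frac{\lambda}{12}=\frac{\pi^2}{a^2}\cdot\frac{t^2}{12}$. Dividing by $\frac{\pi^2}{a^2}$, \eqref{summ2} becomes $\frac{2t^3}{3}-\frac{t^2}{8}-\frac{t}{12}-Mt^2+\frac{M^3}{3}+\frac{M^2}{2}+\frac{M}{6}\ge 0$, and \eqref{summ3} becomes $(M+1)t^2-\frac{M^3}{3}-\frac{M^2}{2}-\frac{M}{6}-\frac{2t^3}{3}-\frac{t^2}{12}\ge 0$.

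Next I would write $t=M+\theta$ with $\theta\in[0,1)$ and expand. Since the two sides of each inequality share the same Weyl-type leading asymptotics, after this substitution the $M^3$ and the $M^2\theta$ contributions cancel identically; what remains in each case is a quadratic in $M$ with $\theta$-dependent coefficients. A direct computation shows that \eqref{summ2} is equivalent to
\[
\frac{3}{8}M^2+\Big(\theta^2-\frac{\theta}{4}+\frac{1}{12}\Big)M+\Big(\frac{2\theta^3}{3}-\frac{\theta^2}{8}-\frac{\theta}{12}\Big)\ge 0,
\]
while \eqref{summ3} is equivalent to
\[
\frac{5}{12}M^2+\Big(-\theta^2+\frac{11\theta}{6}-\frac{1}{6}\Big)M+\Big(\frac{11\theta^2}{12}-\frac{2\theta^3}{3}\Big)\ge 0.
\]

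Finally I would bound the residual quantities. For the first inequality: $\theta^2-\frac{\theta}{4}+\frac{1}{12}=(\theta-\frac18)^2+\frac{13}{192}\ge\frac{13}{192}$, and for $\theta\in[0,1)$ one has $\frac{2\theta^3}{3}-\frac{\theta^2}{8}-\frac{\theta}{12}\ge-\frac{\theta^2}{8}-\frac{\theta}{12}\ge-\frac18-\frac1{12}=-\frac{5}{24}$, so for $M\ge 1$ the left-hand side is at least $\frac38+\frac{13}{192}-\frac{5}{24}=\frac{45}{192}>0$. For the second: $-\theta^2+\frac{11\theta}{6}-\frac16=-\frac16+\theta\big(\frac{11}{6}-\theta\big)\ge-\frac16$ since $\frac{11}{6}-\theta>0$ on $[0,1)$, and $\frac{11\theta^2}{12}-\frac{2\theta^3}{3}=\frac{\theta^2}{12}(11-8\theta)\ge 0$, so for $M\ge 1$ the left-hand side is at least $\frac{5}{12}M^2-\frac16 M=\frac{M(5M-2)}{12}\ge\frac14>0$. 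This establishes \eqref{summ2} and \eqref{summ3}, the hypothesis $M\ge 1$ being used precisely in these last two estimates.

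The only real obstacle is the bookkeeping in Step two — one must verify the vanishing of the $M^3$ and $M^2\theta$ contributions and correctly collect the surviving quadratic — after which the argument is routine, as all three residual estimates go through with crude bounds and no fine analysis (locating critical points, etc.) is required. As an alternative to the closed-form identity one can compare the sums directly with $\int_0^{a\sqrt\lambda/\pi}f_2=\frac{2a}{3\pi}\lambda^{3/2}$ from Lemma \ref{prioffun}(3): since $f_2$ is concave and decreasing, the trapezoidal bound $\sum_{l=1}^M f_2(l)\le\int_0^M f_2-\frac12 f_2(0)+\frac12 f_2(M)$ yields \eqref{summ2}, while the midpoint bound $f_2(l)\ge\int_{l-1/2}^{l+1/2}f_2$, summed over $0\le l\le M$, yields \eqref{summ3}; these lead to the same elementary inequalities.
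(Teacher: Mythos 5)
Your proof is correct, and it takes a genuinely different (though equally elementary) route from the paper's. The paper introduces the auxiliary cubic $g(x)=\lambda x-\frac{\pi^2}{3a^2}x^3$ so that $g(t)=\frac{2a\lambda^{3/2}}{3\pi}$ at $t=\frac{a\sqrt\lambda}{\pi}$, and then uses two separate short calculus tricks: for \eqref{summ2} it uses monotonicity of $g$ (so $g(M)\le g(t)$) together with the floor bound $\lfloor x\rfloor\ge x/2$ for $x\ge 1$ to handle the lower-order terms $-\frac{\pi^2}{2a^2}M^2-\frac{\pi^2}{6a^2}M$; for \eqref{summ3} it uses the mean-value/convexity bound $g(t)-g(M)\le g'(M)(t-M)\le g'(M)$, since $g'$ is decreasing and $t-M<1$. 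Your approach instead substitutes $t=M+\theta$ with $\theta\in[0,1)$ directly into the polynomial identity, observes the automatic cancellation of the cubic-order terms (which is exactly the statement that the Weyl leading terms on both sides agree), and then bounds the resulting quadratic-in-$M$ expression coefficient by coefficient with crude estimates. I have checked all of your algebra: the two reduced inequalities, the collected coefficients $\theta^2-\tfrac{\theta}{4}+\tfrac{1}{12}$, $\tfrac{2\theta^3}{3}-\tfrac{\theta^2}{8}-\tfrac{\theta}{12}$ and $-\theta^2+\tfrac{11\theta}{6}-\tfrac16$, $\tfrac{11\theta^2}{12}-\tfrac{2\theta^3}{3}$, and the final numerical bounds $\frac{45}{192}>0$ and $\frac{M(5M-2)}{12}\ge\frac14$, are all correct. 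The tradeoff is that your method is uniform across both inequalities and makes the cancellation structure explicit, at the cost of more bookkeeping; the paper's method is slicker per inequality but needs a different small trick each time, and in the Neumann case leaves a final polynomial check $2M(3M-1)\ge t^2$ implicit. Your closing remark about the trapezoidal/midpoint alternative is also sound in spirit and is essentially how the paper's proof can be interpreted geometrically, though you are right not to pursue it since the direct computation already closes the argument.
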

	\begin{proof}
		We have
		\begin{equation*}  
			\sum_{l=1}^{M_a^\lambda} (\lambda- \frac{l^2\pi^2}{a^2})  
			=
			\lambda M_a^\lambda-\frac{\pi^2}{3a^2}(M_a^\lambda)^3-\frac{\pi^2}{2a^2}(M_a^\lambda)^2-\frac{\pi^2}{6a^2}M_a^\lambda.
		\end{equation*} 
		Let 
		\begin{equation}\label{g}
			g(x)= \lambda x-\frac{\pi^2}{3a^2} x^3,
		\end{equation}
		then $g'(x)= \lambda- \frac{\pi^2}{a^2}x^2$ which is positive if $x\in (0,\frac{a\sqrt\lambda}{\pi})$. So 
		\begin{equation*}\label{ineq1}
			g(M^\lambda_a) \le g(\frac{a\sqrt{\lambda}}{\pi})= \frac{2a\lambda^{\frac{3}{2}}}{3\pi}.
		\end{equation*}
		Combining with the fact
		\[
		\lfloor{x}\rfloor \ge \frac{x}{2},\qquad \forall x\ge 1,
		\]
		one gets \eqref{summ2}. 
		
		Similarly,
		\begin{equation*}  
			\sum_{l=0}^{M_a^\lambda} (\lambda-\frac{l^2\pi^2}{a^2}) 
			=  \lambda+ \lambda M_a^\lambda-\frac{\pi^2}{3a^2}(M_a^\lambda)^3-\frac{\pi^2}{a^2} \cdot \frac{3(M_a^\lambda)^2+M_a^\lambda}{6}.  
		\end{equation*}
		Again consider the function $g(x)$   defined in (\ref{g}). Since 
		$g'(x)$	is positive and monotonically decreasing on $(0,\frac{a\sqrt\lambda}{\pi})$, one has
		\begin{equation*} \label{10}
			\frac{2a\lambda^{\frac{3}{2}}}{3\pi}-(\lambda M_a^\lambda -\frac{\pi^2}{3a^2} (M_a^\lambda)^3)= g(\frac{a\sqrt\lambda}{\pi})-g(M_a^\lambda) \le g'(M_a^\lambda)= \lambda-\frac{\pi^2}{a^2}(M_a^\lambda)^2,
		\end{equation*}
		which implies \eqref{summ3}.
	\end{proof}

	Now we start to prove Theorem \ref{mthm}. Again  by   \cite[Proposition 3.1]{FLP}, it is enough to assume that $\Omega$ is connected. Since any rectangle in $\mathbb{R}^2$ satisfies P\'olya's conjecture, we can assume that the dimension $d$ of $\Omega$ is at least 2. Again we argue by treating $d \ge 3$ and $d=2$ separately, and by treating Dirichlet case and Neumann case separately.  
	
	\subsection{The Dirichlet case with $d=2$}\label{Diri,d=2}\label{subsec4.1}
	
	The Dirichlet eigenvalues of $(0,a)\times \Omega$ are 
	\begin{equation*}\label{Diri}
		\frac{l^2 \pi^2}{a^2}+ \lambda_k(\Omega), \qquad l,k\in \mathbb{Z}_{>0},
	\end{equation*}
	and thus
	\begin{equation}\label{sum}
		\mathcal N^D_{(0,a)\times \Omega}(\lambda)= \sum_{l=1}^{M_a^\lambda} \mathcal N^D_\Omega (\lambda-\frac{l^2\pi^2}{a^2}).
	\end{equation}
	Note that if $0< \lambda< \frac{\pi^2}{a^2}$, then 
	\[
	\mathcal N^D_{(0,a)\times \Omega}(\lambda)=0< C_3 a|\Omega|\lambda^{\frac{3}{2}}.
	\]
	So one only need to consider the case $\lambda\ge \frac{\pi^2}{a^2}$, i.e. $M_a^\lambda \ge 1$. By inequality (\ref{2termweyl}), for any $\lambda>0$, there exists a constant $C(\Omega)>0$ such that 
	\[
	\mathcal N^D_\Omega(\lambda)\le \frac{|\Omega|}{4\pi}\lambda+ C(\Omega)\lambda^{\frac{1}{2}}.
	\]
	In view of \eqref{summ2} and the fact 
	\[
	\sum_{l=1}^{M_a^\lambda} (\lambda-\frac{l^2\pi^2}{a^2})^{\frac{1}{2}} \le  \int_0^{\frac{a\sqrt{\lambda}}{\pi}} (\lambda- \frac{x^2\pi^2}{a^2})^{\frac{1}{2}} \mathrm{d}x = \frac{a}{4}\lambda
	\]
	we get
	\[  
	\begin{aligned}
		\sum_{l=1}^{M_a^\lambda} \mathcal N^D_\Omega (\lambda-\frac{l^2\pi^2}{a^2}) 
		\le& \frac{|\Omega|}{4\pi} \sum_{l=1}^{M_a^\lambda} (\lambda- \frac{l^2\pi^2}{a^2})+ C(\Omega) \sum_{l=1}^{M_a^\lambda} (\lambda-\frac{l^2\pi^2}{a^2})^{\frac{1}{2}}\\
		\le&   \frac{a|\Omega|\lambda^{\frac{3}{2}}}{6\pi^2}-\frac{|\Omega| \lambda}{32 \pi}+\frac{C(\Omega)a}{4}\lambda.
	\end{aligned}
	\]  
	Thus if we assume  $a<\frac{|\Omega|}{8\pi C(\Omega)}$, then 
	\[
	\mathcal N^D_{(0,a)\times \Omega}(\lambda)\le \frac{a|\Omega|\lambda^{\frac{3}{2}}}{6\pi^2}= C_3 a|\Omega|\lambda^{\frac{3}{2}}.
	\]
	This completes the proof of the Dirichlet case with $d=2$.
	
	\subsection{The Neumann case with $d=2$}\label{Neud=2}
	For the Neumann case, the eigenvalues of $(0,a)\times \Omega$ are 
	\begin{equation*}\label{Neu}
		\frac{l^2 \pi^2}{a^2}+ \mu_k(\Omega), \qquad l,k\in \mathbb{Z}_{\ge 0},
	\end{equation*}
	thus
	\begin{equation} \label{8}
		\mathcal{N}^N_{(0,a)\times \Omega}(\lambda)=\sum_{l=0}^{M_a^\lambda}\mathcal{N}^N_\Omega(\lambda-\frac{l^2\pi^2}{a^2}).
	\end{equation} 
	By inequality (\ref{2termweyl}), for any $\lambda>0$, there exists  $C(\Omega)>0$ such that 
	\[
	\mathcal{N}^N_\Omega(\lambda)\ge C_d|\Omega|\lambda- C(\Omega)\lambda^{\frac{1}{2}}.
	\]
	In view of \eqref{summ3} and the fact 
	\[
	\sum_{l=0}^{M_a^\lambda} (\lambda-\frac{l^2\pi^2}{a^2})^{\frac{1}{2}} \le  \lambda^{\frac 12}+\int_0^{\frac{a\sqrt{\lambda}}{\pi}} (\lambda- \frac{x^2\pi^2}{a^2})^{\frac{1}{2}} \mathrm{d}x =\lambda^{\frac 12}+\frac{a}{4}\lambda
	\]
	we get, for $\lambda \ge \frac{\pi^2}{a^2}$, 
	\begin{equation*} 
		\begin{aligned}
			\sum_{l=0}^{M_a^\lambda} \mathcal{N}^N_\Omega(\lambda-\frac{l^2\pi^2}{a^2})  
			\ge& \frac{|\Omega|}{4\pi}\sum_{l=0}^{M_a^\lambda} (\lambda-\frac{l^2\pi^2}{a^2})-C(\Omega)\sum_{l=0}^{M_a^\lambda} (\lambda-\frac{l^2\pi^2}{a^2})^{\frac{1}{2}}\\
			\ge&   \frac{a|\Omega|\lambda^{\frac{3}{2}}}{6\pi^2}+\frac{|\Omega|\lambda}{48\pi}-C(\Omega)(\lambda^{\frac{1}{2}}+\frac{a \lambda}{4}).
		\end{aligned}
	\end{equation*}  
	So if we assume $a\le \frac{|\Omega|}{96 C(\Omega)}$, then  $\lambda\ge \frac{\pi^2}{a^2} \ge (\frac{96\pi C(\Omega)}{|\Omega|})^2$ and thus 
	\[
	\frac{C(\Omega)a\lambda}{4}\le \frac{|\Omega|\lambda}{4\cdot 96} <\frac{|\Omega|\lambda}{96\pi} \quad \text{and} \quad C(\Omega)\lambda^{\frac{1}{2}}\le \frac{|\Omega|\lambda}{96\pi}.
	\] 
	In other words, if we assume $a\le \frac{|\Omega|}{96 C(\Omega)}$, then for any $\lambda\ge \frac{\pi^2}{a^2}$, 
	\[
	\mathcal{N}^N_{(0,a)\times\Omega}(\lambda)\ge \frac{a |\Omega|\lambda^{\frac{3}{2}}}{6\pi^2}=C_3 a |\Omega|\lambda^{\frac{3}{2}}.
	\]
	
	Next, if $0< \lambda< \frac{\pi^2}{a^2}$, then  
	\[
	\mathcal N^N_{(0,a)\times\Omega}(\lambda)= \mathcal N^N_{\Omega}(\lambda),\quad  \text{ and }\quad  C_3 a|\Omega|\lambda^{\frac{3}{2}}< C_3\pi|\Omega|\lambda.
	\]
	By \eqref{Gampri}, one has $C_{3}\pi< C_2$. So by \eqref{Weyl001}, there exists a constant $C_1(\Omega)>0$, such that if $\lambda\ge C_1(\Omega)$, then
	\[
	\mathcal N^N_\Omega(\lambda)> C_{3}\pi|\Omega|\lambda.
	\] 
	Thus for $C_1(\Omega)\le \lambda< \frac{\pi^2}{a^2}$, one gets
	\[
	\mathcal N^N_{(0,a)\times \Omega}(\lambda)= \mathcal N^N_\Omega(\lambda)> C_{3}\pi|\Omega|\lambda > C_{3}a|\Omega|\lambda^{\frac{3}{2}}.
	\]
	Finally for $0< \lambda\le C_1(\Omega)$, we may require $a\le (C_{3}|\Omega|)^{-1} C_1(\Omega)^{-\frac{3}{2}}$ to get 
	\[
	C_{3}a|\Omega|\lambda^{\frac{3}{2}}\le 1\le \mathcal N^N_{(0,a)\times \Omega}(\lambda).
	\]
	
	Combining  all discussions above, one has that if 
	\begin{equation}\label{caseNeud2}
		a< \min\bigg(\frac{|\Omega|}{96 C(\Omega)}, (C_{3}|\Omega|)^{-1} C_1(\Omega)^{-\frac{3}{2}}\bigg),
	\end{equation}
	then all Neumann eigenvalues of $(0,a)\times \Omega$ satisfy P\'olya's conjecture \eqref{PCNeu}. Thus we complete the proof of Theorem \ref{mthm} with $d=2$.
	
	\subsection{The Dirichlet case with $d\ge 3$}	\label{subsecDiri3}
	
	Again we have  
	\begin{equation*} \label{333Diri}
		\mathcal N^D_{(0,a)\times \Omega}(\lambda)= \sum_{l=1}^{M_a^\lambda} \mathcal N^D_\Omega (\lambda-\frac{l^2\pi^2}{a^2})
	\end{equation*}
	and	there exists a constant $C(\Omega)>0$ such that 
	\[
	\mathcal N^D_\Omega(\lambda)\le C_d|\Omega|\lambda^{\frac{d}{2}}+ C(\Omega)\lambda^{\frac{d-1}{2}}.
	\] 
	So we get
	\begin{equation}\label{dim3Diri}
		\mathcal N^D_{(0,a)\times \Omega}(\lambda)
		\le 
		C_d |\Omega| \sum_{l=1}^{M_a^\lambda} f_d(l)+ C(\Omega) \sum_{l=1}^{M_a^\lambda}f_{d-1}(l),
	\end{equation}
	where $f_d$ is defined in Lemma 4.1. 
	We split the first sum into two parts. 
	Denote 
	\[N_a^\lambda= \lfloor \sqrt{\frac{\lambda}{d-1}}\frac{a}{\pi} \rfloor.\]
	By concavity of $f_d$ (see (2) of Lemma \ref{prioffun}), one has 
	\[ \int_0^{N_a^\lambda}f_d(x)\mathrm{d}x- \sum_{l=1}^{N_a^\lambda} f_d(l)
	\ge  \sum_{l=0}^{N_a^\lambda-1} f_d(l)- \int_0^{N_a^\lambda} f_d(x)\mathrm{d}x
	\]
	which implies
	\[
	\sum_{l=1}^{N_a^\lambda}f_d(l)\le \int_0^{N_a^\lambda} f_d(x)\mathrm{d}x- \frac{1}{2}\bigg(\lambda^{\frac{d}{2}}- f_d(N_a^\lambda)\bigg).
	\]
	First consider  $\lambda\ge \frac{(d-1)\pi^2}{a^2}$, in which case $N_a^\lambda \ge \frac 12\frac a{\pi}\sqrt{\frac{\lambda}{d-1}}$, and thus 
	\[f_d(N_a^\lambda)  \le f_d(\frac 12\frac a{\pi}\sqrt{\frac{\lambda}{d-1}}) = (\frac{4d-5}{4d-4})^{\frac{d}{2}}\lambda^{\frac d2}.\]
	For simplicity, we denote 
	\[
	A_d=\frac{1}{2}\bigg(1-\big(\frac{4d-5}{4d-4}\big)^{\frac{d}{2}}\bigg)C_d|\Omega|.
	\] 
	Then we get, for $\lambda\ge \frac{(d-1)\pi^2}{a^2}$,  
	\[
	\begin{aligned}
		\mathcal N^D_{(0,a)\times \Omega}(\lambda) 
		\le& C_d|\Omega| \sum_{l=1}^{N_a^\lambda}f_d(l)+ C_d|\Omega| \sum_{l=N_a^\lambda+1}^{M_a^\lambda}f_d(l)+ C(\Omega) \sum_{l=1}^{M_a^\lambda} f_{d-1}(l)\\
		\le& C_d|\Omega|\int_0^{{\frac{a\sqrt\lambda}{\pi}}} f_d(x)\mathrm{d}x -A_d \lambda^{\frac{d}{2}}+C(\Omega)\int_0^{{\frac{a\sqrt\lambda}{\pi}}} f_{d-1}(x)\mathrm{d}x\\
		=& C_{d+1}a|\Omega|\lambda^{\frac{d+1}{2}}-A_d\lambda^{\frac{d}{2}}+C(\Omega)\frac{C_da}{C_{d-1}}\lambda^{\frac{d}{2}}.
	\end{aligned}
	\]
	So if we assume $a\le \frac{A_d\cdot C_{d-1}}{C(\Omega)\cdot C_d}$, then for any $\lambda \ge \frac{(d-1)\pi^2}{a^2_1}$,  
	\begin{equation}\label{pol3}
		\mathcal N^D_{(0,a)\times \Omega}(\lambda)\le C_{d+1}a|\Omega|\lambda^{\frac{d+1}{2}}.
	\end{equation}
	
	Note that if $0< \lambda< \frac{\pi^2}{a^2}$, then we automatically have 
	\[
	\mathcal N^D_{(0,a)\times \Omega}(\lambda)=0< C_{d+1}a|\Omega|\lambda^{\frac{d+1}{2}},
	\] 
	so it remains to consider the case $\frac{\pi^2}{a^2}\le \lambda< \frac{(d-1)\pi^2}{a^2}$. Let $\mu= \frac{\lambda a^2}{\pi^2}$, then $1\le \mu< d-1$.
	Let 
	\[
	H_1:= \inf_{1\le \mu< d-1} \frac{\int^{\sqrt\mu}_0 (\mu- x^2)^{\frac{d}{2}} \mathrm{d}x- \sum_{0<l^2< \mu} (\mu- l^2)^{\frac{d}{2}}}{\mu^{\frac{d}{2}}}>0,
	\]
	then
	\[
	\begin{aligned}
		\mathcal N^D_{(0,a)\times \Omega}(\lambda) 
		\le& C_d|\Omega|\int_0^{{\frac{a\sqrt\lambda}{\pi}}} f_d(x)\mathrm{d}x -C_d|\Omega|H_1 \lambda^{\frac{d}{2}}+C(\Omega)\int_0^{{\frac{a\sqrt\lambda}{\pi}}} f_{d-1}(x)\mathrm{d}x\\
		=  & C_{d+1}a|\Omega|\lambda^{\frac{d+1}{2}}-C_d|\Omega|H_1 \lambda^{\frac{d}{2}}+C(\Omega)\frac{C_da}{C_{d-1}}\lambda^{\frac{d}{2}}.
	\end{aligned}
	\]
	So if we assume $a\le \frac{C_{d-1}|\Omega| H_1}{C(\Omega)}$, then for any $\frac{\pi^2}{a^2}\le \lambda< \frac{(d-1)\pi^2}{a^2}$,  
	\[
	\mathcal N^D_{(0,a)\times \Omega}(\lambda)\le C_{d+1}a|\Omega|\lambda^{\frac{d+1}{2}}.
	\]
	Combining  with (\ref{pol3}), one gets that if 
	\[
	a\le \min\bigg(\frac{A_d\cdot C_{d-1}}{C(\Omega)\cdot C_d}, \frac{C_{d-1}|\Omega| H_1}{C(\Omega)}\bigg),
	\] 
	then all Dirichlet eigenvalues of $(0,a)\times \Omega$ satisfy  P\'olya's conjecture \eqref{PCDir}.

	\subsection{The Neumann case with $d\ge 3$}\label{Neud3}
	
	Again we have 
	\begin{equation*} \label{88}
		\mathcal{N}^N_{(0,a)\times \Omega}(\lambda)=\sum_{l=0}^{M_a^\lambda}\mathcal{N}^N_\Omega(\lambda-\frac{l^2\pi^2}{a^2})
	\end{equation*} 
	and there exists a constant $C(\Omega)>0$ such that 
	\[
	\mathcal N^N_\Omega(\lambda)\ge C_d|\Omega|\lambda^{\frac{d}{2}}- C(\Omega)\lambda^{\frac{d-1}{2}},\qquad \forall \lambda>0.
	\] 
	So we get
	\begin{equation}\label{dim3Neu}
		\mathcal{N}^N_{(0,a)\times \Omega}(\lambda)
		\ge 
		C_d|\Omega| \sum_{l=0}^{M_a^\lambda} f_d(l)-C(\Omega)\sum_{l=0}^{M_a^\lambda} f_{d-1}(l).
	\end{equation}
	By convexity of $f_d$ (see (2) of Lemma \ref{prioffun}), one has 
	\[\sum_{l=N_a^\lambda+1}^{M_a^\lambda} f_d(l) -\int_{N_a^\lambda+1}^{\frac{a\sqrt\lambda}{\pi}} f_d(x) \mathrm{d}x
	\ge \int_{N_a^\lambda+1}^{\frac{a\sqrt\lambda}{\pi}} f_d(x)\mathrm{d}x- \sum_{l=N_a^\lambda+2}^{M_a^\lambda}f_d(l)
	\]
	which implies
	\[
	\sum_{l=N_a^\lambda+1}^{M_a^\lambda}f_d(l)
	\ge  \int_{N_a^\lambda+1}^{\frac{a\sqrt\lambda}{\pi}} f_d(x) \mathrm{d}x+ \frac{1}{2}f_d(N_a^\lambda+1).
	\]
	If $\lambda \ge \frac{9\pi^2(d-1)}{a^2}$, then $N_a^\lambda \ge 3$ and thus $N_a^\lambda+1 \le \frac{4a}{3\pi}\sqrt{\frac{\lambda}{d-1}}$, which implies
	\[
	f_d(N_a^\lambda+1) \ge	f_d(\frac{4a}{3\pi}\sqrt{\frac{\lambda}{d-1}}) \ge 3^{-{d}}\lambda^{\frac{d}{2}},
	\]
	where we used $d\ge 3$.   For simplicity, we denote 
	\[
	B_d= \frac{1}{2}3^{-d}C_d|\Omega|.
	\] 
	Then for $\lambda \ge \frac{9\pi^2(d-1)}{a^2}$,  
	\[
	\begin{aligned}
		\mathcal{N}^N_{(0,a)\times \Omega}(\lambda)
		\ge & C_d|\Omega|\sum_{l=0}^{N_a^\lambda}f_d(l) +C_d |\Omega|\sum_{l=N_a^\lambda +1}^{M_a^\lambda} f_d(l)- C(\Omega)\sum_{l=0}^{M_a^\lambda}f_{d-1}(l)\\
		\ge & C_d|\Omega| \int_0^{\frac{a\sqrt\lambda}{\pi}}\!\!\!f_d(x) \mathrm{d}x+ B_d\lambda^{\frac{d}{2}}- C(\Omega)\big(\lambda^{\frac{d-1}{2}}+ \int_0^{{\frac{a\sqrt\lambda}{\pi}}}\!\!\! f_{d-1}(x) \mathrm{d}x\big)\\
		= & C_{d+1}a|\Omega|\lambda^{\frac{d+1}{2}}+ B_d\lambda^{\frac{d}{2}}- C(\Omega)\big(\lambda^{\frac{d-1}{2}}+ \frac{C_d a}{C_{d-1}}\lambda^{\frac{d}{2}}\big).
	\end{aligned}
	\]
	So if  we assume \[
	a\le \min\bigg(\frac{B_d C_{d-1}}{2C(\Omega) C_d}, \frac{B_d3\pi\sqrt{d-1}}{2C(\Omega)}\bigg),
	\] 
	then $ \lambda \ge \frac{9\pi^2(d-1)}{a^2}\ge \frac{4C(\Omega)^2}{B_d^2}$ and thus 
	\[
	\frac{C(\Omega) C_d a}{C_{d-1}}\lambda^{\frac{d}{2}}\le \frac{1}{2}B_d\lambda^{\frac{d}{2}} \quad \text{and} \quad  C(\Omega)\lambda^{\frac{d-1}{2}} \le \frac{1}{2}B_d\lambda^{\frac{d}{2}}.
	\] 
	Thus for any $\lambda\ge \frac{9\pi^2(d-1)}{a^2}$, one gets the demanded inequality
	\[
	\mathcal{N}^N_{(0,a)\times \Omega}(\lambda)\ge C_{d+1}a |\Omega|\lambda^{\frac{d+1}{2}}.
	\]
	
	Next if $\frac{\pi^2}{a^2}\le \lambda \le \frac{9\pi^2(d-1)}{a^2}$, let $\mu= \frac{\lambda a^2}{\pi^2}$, then $1\le \mu \le 9(d-1)$. Let
	\[
	H_2= \inf_{1\le \mu\le 9(d-1)} \frac{\sum_{0\le l^2< \mu} (\mu- l^2)^{\frac{d}{2}}- \int_0^{\sqrt\mu} (\mu- x^2)^{\frac{d}{2}} \mathrm{d}x}{\mu^{\frac{d}{2}}}>0,
	\]
	then   
	\[
	\begin{aligned}
		\mathcal{N}^N_{(0,a)\times \Omega}(\lambda)
		\ge &C_d|\Omega| \int_0^{\frac{a\sqrt\lambda}{\pi}}\!\!\! f_d(x) \mathrm{d}x+ C_d|\Omega|H_2\lambda^{\frac{d}{2}}- C(\Omega)\big(\lambda^{\frac{d-1}{2}}+ \int_0^{{\frac{a\sqrt\lambda}{\pi}}}\!\!\! f_{d-1}(x)\mathrm d x\big)\\
		=   &C_{d+1}a|\Omega|\lambda^{\frac{d+1}{2}}+ C_d|\Omega|H_2\lambda^{\frac{d}{2}}- C(\Omega)\big(\lambda^{\frac{d-1}{2}}+ \frac{C_d a}{C_{d-1}}\lambda^{\frac{d}{2}}\big).
	\end{aligned}
	\]
	Similar to the case $\lambda\ge \frac{9\pi^2(d-1)}{a^2}$, one can prove that if we take 
	$a\le \min\big(\frac{|\Omega|H_2}{2C_{d-1}}, \frac{\pi C_d|\Omega|H_2}{2 C(\Omega)}\big)$,   
	then for any $\frac{\pi^2}{a^2}\le \lambda \le \frac{9\pi^2(d-1)}{a^2}$, one gets  
	\[
	\mathcal{N}^N_{(0,a)\times \Omega}(\lambda)\ge C_{d+1}a |\Omega|\lambda^{\frac{d+1}{2}}.
	\]
	
	Finally for $0< \lambda< \frac{\pi^2}{a^2}$, we just repeat the corresponding part in the proof of the Neumann case with $d=2$.
	In conclusion, we   get: if
	\[
	a< \min\bigg(\frac{B_d C_{d-1}}{2C(\Omega) C_d}, \frac{B_d3\pi\sqrt{d-1}}{2C(\Omega)},\frac{|\Omega|H_2}{2C_{d-1}}, \frac{\pi C_d|\Omega|H_2}{2 C(\Omega)}, (C_{d+1}|\Omega|)^{-1} C_1(\Omega)^{-\frac{d+1}{2}}\bigg),
	\]
	then all Neumann eigenvalues of $(0,a)\times \Omega$ satisfy   P\'olya's conjecture \eqref{PCNeu}. So we complete the proof of Theorem \ref{mthm}. $\hfill\square$
	
	\section{Proof of Theorem \ref{mthm3}}		\label{pfmthm3}
	
	Again  by   \cite[Proposition 3.1]{FLP}, it is enough to assume that both $\Omega$ and $M$ are connected. Let the eigenvalues of $M$ be 
	\[
	0=\lambda_0(M)< \lambda_1(M) \leq \cdots\nearrow \infty,
	\]
	and the counting functions for the eigenvalues of $M$ be
	\[
	\mathcal N_M(\lambda)=\#\{n|\ \lambda_n(M)<\lambda\}.
	\]
	B. M. Levitan (\cite{Lev52}) and V. G. Avakumovi{\'c} (\cite{Ava56}) proved that 
	\[
	\mathcal N_M(\lambda)= C_{d_2}|M|\lambda^{\frac{d_2}{2}}+ \mathrm O(\lambda^{\frac{d-1}{2}}), \text{ as }\lambda \to \infty.
	\]
	So there exists a constant $C(M)>0$ such that 
	\begin{equation}\label{MNeu}
		\mathcal N_M(\lambda)\ge C_{d_2}|M|\lambda^{\frac{d_2}{2}}- C(M)\lambda^{\frac{d_2-1}{2}}, \ \forall \lambda>0.
	\end{equation}
	Repeating the proof of the Neumann case of Theorem \ref{mthm2} and Theorem \ref{mthm} word by word, one can easily prove the Neumann case of Theorem \ref{mthm3}.
	
	For the Dirichlet case of Theorem \ref{mthm3}, since 0 is an eigenvalue of $M$, one can only get that there exists a constant $C_1(M)>0$ such that 
	\begin{equation}\label{MDiri}
		\mathcal N_M(\lambda)\le C_{d_2}|M|\lambda^{\frac{d_2}{2}}+ C_1(M)\lambda^{\frac{d_2-1}{2}}+1,\ \forall \lambda>0.
	\end{equation}
	So to prove the Dirichlet case of Theorem \ref{mthm3}, one need to carefully handle this extra number. Again we divided the proof into three parts: the Dirichlet case with $d_1=1$ and $d_2=2$, the Dirichlet case with $d_1=1$ and $d_2\ge 3$, and the Dirichlet case with $d_1\ge 2$.
	
	\subsection{The Dirichlet case with $d_1=1$ and $d_2=2$}
	
	When $d_1=1$, we can assume $\Omega=(0,1)$ for simplicity. The Dirichlet eigenvalues of $(0,a)\times M$ are
	\[
	\frac{l^2 \pi^2}{a^2}+ \lambda_k(M), \qquad l\in \mathbb Z_{>0},\ k\in \mathbb Z_{\ge 0}.
	\]
	If $0<\lambda<\frac{\pi^2}{a^2}$, then $\mathcal N^D_{(0,a)\times M}(\lambda)=0< C_3 a|M|\lambda^{\frac{3}{2}}$. For $\lambda\ge \frac{\pi^2}{a^2}$, by \eqref{summ2} we get 
	\[
	\begin{aligned}
		\mathcal N^D_{(0,a)\times M}(\lambda)
		&= \sum_{l=1}^{M^\lambda_a}\mathcal N_M(\lambda-\frac{l^2\pi^2}{a^2}) \\
		&\le \frac{|M|}{4\pi}\sum_{l=1}^{M^\lambda_a} (\lambda-\frac{l^2\pi^2}{a^2}) +C_1(M) \sum_{l=1}^{M^\lambda_a} (\lambda-\frac{l^2\pi^2}{a^2})^{\frac{1}{2}}+ M^\lambda_a
		\\
		&\le \frac{|M|}{4\pi}(\frac{2a\lambda^{\frac{3}{2}}}{3\pi}-\frac{\lambda}{8}- \frac{\sqrt \lambda \pi}{12 a})+\frac{C_1(M)a}{4}\lambda+ \frac{a\sqrt\lambda}{\pi}.
	\end{aligned}
	\]		
	Note that if $a\le \sqrt{\frac{|M|\pi}{48}}$, then 
	\[ 
	\frac{|M|}{4\pi}(\frac{2a\lambda^{\frac{3}{2}}}{3\pi}-\frac{\lambda}{8}- \frac{\sqrt \lambda \pi}{12 a})+\frac{C_1(M)a}{4}\lambda+ \frac{a\sqrt\lambda}{\pi}  
	\le \frac{a|M|\lambda^{\frac{3}{2}}}{6\pi^2}-\frac{|M| \lambda}{32 \pi}+\frac{C_1(M)a}{4}\lambda. 
	\]
	Thus we proved: if 
	\begin{equation}\label{Diri12}
		a \le \min\big(\sqrt{\frac{|M|\pi}{48}}, \frac{|M|}{8\pi C_1(M)}\big), 
	\end{equation}
	then all Dirichlet eigenvalues of $(0,a)\times M$ satisfy P\'olya's conjecture \eqref{PCDir}.

	\subsection{The Dirichlet case with $d_1=1$ and $d_2\ge 3$}
	
	We still have
	\[
	\begin{aligned}
		\mathcal N^D_{(0,a)\times M}(\lambda)
		&=\sum_{l=1}^{M^\lambda_a}\mathcal N_M(\lambda-\frac{l^2\pi^2}{a^2}) \\
		&\le C_{d_2}|M|\sum_{l=1}^{M^\lambda_a} (\lambda-\frac{l^2\pi^2}{a^2})^{\frac{d_2}{2}}+ C_1(M)\sum_{l=1}^{M^\lambda_a}(\lambda-\frac{l^2\pi^2}{a^2})^{\frac{d_2-1}{2}}+ M^\lambda_a.
	\end{aligned}
	\]
	As in \S\ref{subsecDiri3}, if $\lambda\ge \frac{(d_2-1)\pi^2}{a^2}$, one has
	\[
	\mathcal N^D_{(0,a)\times M}(\lambda) 
	\le C_{d_2+1}a|M|\lambda^{\frac{d_2+1}{2}}- A_{d_2}\lambda^{\frac{d_2}{2}}+ C_1(M)\frac{C_{d_2}a}{C_{d_2-1}}\lambda^{\frac{d_2}{2}}+ \frac{a\sqrt \lambda}{\pi}, 
	\]
	where $A_{d_2}= \frac{1}{2}(1-(\frac{4d_2-5}{4d_2-4})^{\frac{d_2}{2}})C_{d_2}|M|$.
	To control the last term,  we require 
	\[
	a< \big(\frac{\pi}{2} A_{d_2}\big)^{\frac{1}{d_2}}\big( \pi^2 (d_2-1)\big)^{\frac{d_2-1}{2d_2}} 
	\]
	to get
	\[\frac{a\sqrt\lambda}{\pi}\le \frac{1}{2}A_{d_2}\lambda^{\frac{d_2}{2}}\]
	for all $\lambda\ge \frac{(d_2-1)\pi^2}{a^2}$. Repeating  \S\ref{subsecDiri3}, we will get: if 
	\[
	a<\min \bigg(\pi \big(\frac{1}{2} A_{d_2}\big)^{\frac{1}{d_2}}(d_2-1)^{\frac{d_2-1}{2d_2}}, \frac{A_{d_2}C_{d_2-1}}{2 C_1(M)C_{d_2}}\bigg),
	\]
	then for any $\lambda\ge \frac{(d_2-1)\pi^2}{a^2}$,  
	\[
	\mathcal N^D_{(0,a)\times M}(\lambda)\le C_{d_2+1}a|M|\lambda^{\frac{d_2+1}{2}}. 
	\]
	
	For $\lambda < \frac{(d_2-1)\pi^2}{a^2}$, again one only needs to consider $\frac{\pi^2}{a^2}\le\lambda< \frac{(d_2-1)\pi^2}{a^2}$. As in  \S\ref{subsecDiri3}, in this case one has
	\[
	\mathcal N^D_{(0,a)\times M}(\lambda)
	\le C_{d_2+1}a|M|\lambda^{\frac{d_2+1}{2}}-C_{d_2}|M|H_1\lambda^{\frac{d_2}{2}}+C_1(M)\frac{C_{d_2}a}{C_{d_2-1}}\lambda^{\frac{d_2}{2}}+\frac{a\sqrt\lambda}{\pi},
	\]
	where 
	\[
	H_1:=\inf_{1\le \mu \le d_2-1} \frac{\int^{\sqrt\mu}_0 (\mu- x^2)^{\frac{d_2}{2}} \mathrm{d}x- \sum_{0<l^2< \mu} (\mu- l^2)^{\frac{d_2}{2}}}{\mu^{\frac{d_2}{2}}}>0.
	\]
	So if we assume  
	\[
	a<\min \bigg( \pi\big(\frac{1}{2}C_{d_2}|M| H_1\big)^{\frac{1}{d_2}},\frac{C_{d_2-1}|M|H_1}{2 C_1(M)} \bigg),
	\]
	then for all $\frac{\pi^2}{a^2}\le\lambda< \frac{(d_2-1)\pi^2}{a^2}$, one has
	\[
	\mathcal N^D_{(0,a)\times M}(\lambda)\le C_{d_2+1}a|M|\lambda^{\frac{d_2+1}{2}}. 
	\]
	
	Thus if $d_2\ge 3$ and 
	\[
	a< \min\bigg(\pi \big(\frac{1}{2} A_{d_2}\big)^{\frac{1}{d_2}}(d_2-1)^{\frac{d_2-1}{2d_2}}, \frac{A_{d_2}C_{d_2-1}}{2 C_1(M)C_{d_2}}, \pi\big(\frac{1}{2}C_{d_2}|M| H_1\big)^{\frac{1}{d_2}},\frac{C_{d_2-1}|M|H_1}{2 C_1(M)}  \bigg),
	\]
	then all Dirichlet eigenvalues of $(0,a)\times M$ satisfy P\'olya's Conjecture \eqref{PCDir}.
	
	\subsection{The Dirichlet case with $d_1\ge 2$}\label{mfld23}
	
	The Dirichlet eigenvalues of $a\Omega\times M$ are 
	\[
	a^{-2}\lambda_l(\Omega)+ \lambda_k(M),\qquad l\in \mathbb Z_{>0},\ k\in \mathbb Z_{\ge 0},
	\]
	which implies  
	\[ \mathcal N^D_{a\Omega\times M}(\lambda) 
	\le C_{d_2}|M|a^{-d_2} \sum_{l=1}^{Z^\lambda_a} (a^2\lambda- \lambda_l(\Omega))^{\frac{d_2}{2}}+ C_1(M)a^{1-d_2} \sum_{l=1}^{Z^\lambda_a} (a^2\lambda- \lambda_l(\Omega))^{\frac{d_2-1}{2}}+ Z^\lambda_a
	\]
	where $Z^\lambda_a= \mathcal N^D_\Omega(a^2\lambda)$. To control the extra $Z^\lambda_a$, we use Li-Yau's estimate  \eqref{LYineq2} to get 
	\[
	\mathcal N^D_\Omega(\lambda)\le \big(\frac{d_1+2}{d_1}\big)^{\frac{d_1}{2}} C_{d_1}|\Omega| \lambda^{\frac{d_1}{2}}, \qquad \forall \lambda>0.
	\]
	Thus for all $a>0$ and $\lambda>0$,
	\[
	Z^\lambda_a= \mathcal N^D_\Omega(a^2\lambda) \le \big(\frac{d_1+2}{d_1}\big)^{\frac{d_1}{2}} C_{d_1}|\Omega| a^{d_1}\lambda^{\frac{d_1}{2}}.
	\]
	If $d_2 \ge 3$, then as in \S\ref{subsec23}, there exists a constant $C_1(\Omega)>0$ such that for $a^2\lambda> C_1(\Omega)$,  
	\[
	\begin{aligned}
		\mathcal N^D_{a\Omega\times M}(\lambda) 
		\le&C_{d_1+d_2}a^{d_1}|\Omega||M|\lambda^{\frac{d_1+d_2}{2}}-\frac{1}{5} C_{d_1+d_2-1}|\partial \Omega||M|a^{d_1-1}\lambda^{\frac{d_1+d_2-1}{2}}+\\
		&L_{\frac{d_2-1}{2},d_1}|\Omega| C_1(M)a^{d_1}\lambda^{\frac{d_1+d_2-1}{2}}+ (\frac{d_1+2}{d_1})^{\frac{d_1}{2}}C_{d_1}|\Omega|a^{d_1}\lambda^{\frac{d_1}{2}}.
	\end{aligned}
	\]
	So if we assume 
	\[
	a<\big(\frac{ C_{d_1+d_2-1}|\partial \Omega||M|}{10 C_{d_1}|\Omega|}\big)^{\frac{1}{d_2}} \big(\frac{d_1+2}{d_1}\big)^{-\frac{1}{2}} C_1(\Omega)^{\frac{d_2-1}{2d_2}},
	\]
	then for any $\lambda> a^{-2}C_1(\Omega)$, the extra term is controlled by 
	\[
	(\frac{d_1+2}{d_1})^{\frac{d_1}{2}}C_{d_1}|\Omega|a^{d_1}\lambda^{\frac{d_1}{2}}\le \frac{1}{10}C_{d_1+d_2-1}|\partial \Omega||M|a^{d_1-1}\lambda^{\frac{d_1+d_2-1}{2}}.
	\]
	
	If $d_2=2$, then as in \S\ref{subsec22}, there exists a constant $C_1(\Omega)>0$ such that if $a^2\lambda> C_1(\Omega)$, one has
	\[
	\begin{aligned}
		\mathcal N^D_{a\Omega\times M}(\lambda) 
		\le&C_{d_1+2}a^{d_1}|\Omega||M|\lambda^{\frac{d_1+2}{2}}-\frac{1}{5}C_{d_1+1}|M||\partial \Omega|a^{d_1-1}\lambda^{\frac{d_1+1}{2}}+\\
		&\big(\frac{d_1+2}{d_1}\big)^{\frac{d_1}{2}} C_1(M)L_{\frac{1}{2}, d_1}|\Omega|a^{d_1}\lambda^{\frac{d_1+1}{2}}+(\frac{d_1+2}{d_1})^{\frac{d_1}{2}}C_{d_1}|\Omega|a^{d_1}\lambda^{\frac{d_1}{2}}.
	\end{aligned}
	\]
	and similarly we can control the last term. 
	
	The rest of the proof for both cases are identically the same as before, and thus will be omitted.  $\hfill\square$
	
	\subsection{An abstract extension}
	
	As we have seen, although the upper bound given by \eqref{MDiri} is a bit weaker than  \eqref{2termweyl}, the  extra term 1 can be controlled. Of course one may replace 1 by other number. 
	
	More generally, one may start with two  increasing sequence
	\[0 < s_1 \le s_2 \le \cdots +\infty, \qquad t_1 \le t_2 \le \cdots \to +\infty \]
	and study the new increasing sequence   $\{\nu_k(a)\}_{k=1}^\infty=\{a^{-2} s_m+t_n \}$. As usual we will denote 
	\[\mathcal N_{(s_k)}(\lambda)=\#\{k  | s_k \le \lambda\}\]
	and likewise for $\mathcal N_{(t_k)}(\lambda)$. 
	By using the same idea and modifying the proof above slightly, it is easy to prove
	\begin{theorem} 
		Suppose there exist constants $V_t, B_1, B_2>0,d\ge 2$ such that 
		\begin{equation}\label{UBforN}
			\mathcal  N_{(t_k)}(\lambda) \le V_t C_{d} \lambda^{\frac d2}+B_1 \lambda^{\frac{d-1}2} + B_2, \qquad \forall \lambda,
		\end{equation} 
		and suppose   either  $s_k= \pi^2 k^2 (k \ge 1)$ (in which case we take $V_s=1$, $d'=1$ below), or there exist  $V_s >0$ and $d' \ge 2$ such that \[\sum_{s_k<\lambda}(\lambda-s_k)<L_{1,{d'}}V_s \lambda^{\frac {d'}2+1},\qquad \forall \lambda>0,\]
		and there exist  $C'>0$ and $C_s>0$ such that  for all $\lambda>C_s$, 
		\[ \sum_{s_k<\lambda}(\lambda-s_k) \le L_{1,d'}V_s \lambda^{\frac {d'}2+1}-C'\lambda^{\frac{d'+1}{2}},\]
		then there exists $a_0>0$ such that for any $0<a<a_0$, 
		\[
		\nu_k(a) \ge \frac{4\pi^2}{(\omega_{d+d'}a^{d'}V_sV_t)^{\frac{2}{d+d'}}} k^{\frac{2}{d+d'}},\qquad \forall k\ge 1.
		\]

	\end{theorem}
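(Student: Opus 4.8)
The plan is to run, at the level of bare structure, the arguments used for Theorems \ref{mthm2}, \ref{mthm} and \ref{mthm3}. Set $\mathcal N^a(\lambda):=\#\{k:\ \nu_k(a)<\lambda\}$; by the standard argument relating counting function bounds to eigenvalue bounds, the asserted lower bound on $\nu_k(a)$ follows from the strict inequality $\mathcal N^a(\lambda)<C_{d+d'}\,a^{d'}V_sV_t\,\lambda^{\frac{d+d'}{2}}$ for all $\lambda>0$, which is what I will prove for $a$ small. Writing $\Sigma_\gamma(\mu):=\sum_{s_k<\mu}(\mu-s_k)^\gamma$ and using $\mathcal N^a(\lambda)\le\sum_{a^{-2}s_m<\lambda}\mathcal N_{(t_k)}(\lambda-a^{-2}s_m)$ together with \eqref{UBforN},
\[
\mathcal N^a(\lambda)\le V_tC_d\,a^{-d}\Sigma_{d/2}(a^2\lambda)+B_1\,a^{1-d}\Sigma_{(d-1)/2}(a^2\lambda)+B_2\,\mathcal N_{(s_k)}(a^2\lambda),
\]
so everything reduces to controlling $\Sigma_{d/2}$, $\Sigma_{(d-1)/2}$ and $\mathcal N_{(s_k)}$ on the scale $\mu=a^2\lambda$.

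The second step is to deduce from the hypothesis on $(s_k)$ everything needed about these three quantities. In the case $s_k=\pi^2k^2$ the elementary facts behind Lemma \ref{prioffun} and \eqref{summ2} give $\Sigma_1(\mu)<L_{1,1}\mu^{3/2}$ for all $\mu>0$ and $\Sigma_1(\mu)\le L_{1,1}\mu^{3/2}-\tfrac18\mu$ for $\mu\ge\pi^2$, so this case is subsumed in the general one with $V_s=1$, $d'=1$. Then, exactly as in Corollary \ref{gammaRiezeDiri}, the integral identity \eqref{integral} promotes the two $\gamma=1$ bounds to all $\gamma\ge1$: $\Sigma_\gamma(\mu)<L_{\gamma,d'}V_s\mu^{\gamma+d'/2}$ for every $\mu>0$, and $\Sigma_\gamma(\mu)\le L_{\gamma,d'}V_s\mu^{\gamma+d'/2}-c_\gamma\mu^{\gamma+(d'-1)/2}$ for $\mu$ beyond a threshold, with some $c_\gamma>0$. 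Finally, the global bound on $\Sigma_1$ yields, via the elementary inequality $\Sigma_1(\mu)\ge(1-\theta)\mu\,\mathcal N_{(s_k)}(\theta\mu)$ for $\theta\in(0,1)$, optimized at $\theta=\tfrac{d'}{d'+2}$, a Li--Yau-type bound $\mathcal N_{(s_k)}(\mu)<\big(\tfrac{d'+2}{d'}\big)^{d'/2}C_{d'}V_s\,\mu^{d'/2}$ for all $\mu>0$ (trivially so when $d'=1$).

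The third step combines these with the $\Gamma$-function identity $C_dL_{d/2,d'}=C_{d+d'}$. For the leading term: for $a^2\lambda$ large the two-term bound gives $V_tC_d\,a^{-d}\Sigma_{d/2}(a^2\lambda)\le C_{d+d'}a^{d'}V_sV_t\,\lambda^{\frac{d+d'}{2}}-c_{d/2}V_tC_d\,a^{d'-1}\lambda^{\frac{d+d'-1}{2}}$, and for $a^2\lambda$ in a fixed compact interval $[\delta,\mu_0]$ with $0<\delta<s_1$ the strict global bound gives the same with $c_{d/2}$ replaced by a constant $K=K(\delta,\mu_0)>0$ (as in Corollary \ref{gammaRiezeDiri}). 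The remaining two terms are errors of strictly lower order: $B_2\mathcal N_{(s_k)}(a^2\lambda)$ is $O(a^{d'}\lambda^{d'/2})$ by Li--Yau, and $B_1a^{1-d}\Sigma_{(d-1)/2}(a^2\lambda)$ is $O(a^{d'}\lambda^{\frac{d+d'-1}{2}})$ --- when $d\ge3$ this is the global bound on $\Sigma_{(d-1)/2}$ ($\gamma=\tfrac{d-1}2\ge1$), and when $d=2$ it is obtained by inserting the Li--Yau bound for $\mathcal N_{(s_k)}$ into $\Sigma_{1/2}$ and evaluating the resulting Beta integral, precisely as in \eqref{star}. Assembling, for $a^2\lambda$ past the threshold,
\[
\mathcal N^a(\lambda)\le C_{d+d'}a^{d'}V_sV_t\,\lambda^{\frac{d+d'}{2}}+a^{d'-1}\lambda^{\frac{d+d'-1}{2}}\big(-c_{d/2}V_tC_d+O(a)\big)+O\!\big(a^{d'}\lambda^{d'/2}\big);
\]
since $d\ge2$ gives $\tfrac{d'}2<\tfrac{d+d'-1}2$, the last term is absorbed by the (negative, for $a$ small) middle one once $\lambda$ exceeds a multiple of $a^{-2}$, which yields the strict bound for $\lambda>\mu_0 a^{-2}$. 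On the bounded range $\delta a^{-2}<\lambda\le\mu_0 a^{-2}$ the term $-KV_tC_da^{d'-1}\lambda^{\frac{d+d'-1}{2}}$ beats both errors for $a$ small (using $\lambda\ge\delta a^{-2}$ to compare powers of $\lambda$), and for $\lambda\le\delta a^{-2}$ the counting function vanishes, because for $a$ small $\delta a^{-2}\le\nu_1(a)=a^{-2}s_1+t_1$. Taking $a_0$ to satisfy the finitely many smallness requirements so generated completes the proof.

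The step I expect to be the main obstacle is the second one: extracting from the single scalar hypothesis on $\Sigma_1$ all of (i) the two-term $\Sigma_{d/2}$ bound with exactly the constant $C_dL_{d/2,d'}=C_{d+d'}$, (ii) the $\Sigma_{(d-1)/2}$ bound, which in the borderline dimension $d=2$ cannot use a Riesz-mean inequality and must instead go through Li--Yau and an explicit Beta integral, and (iii) the Li--Yau bound on $\mathcal N_{(s_k)}$ --- and then verifying that the errors produced by \eqref{UBforN} really do have strictly smaller power of $\lambda$ than the gain, uniformly over the three ranges of $\lambda$. Everything downstream of that is the same bookkeeping of smallness conditions already performed in the corresponding parts of \S\ref{subsec23}, \S\ref{subsec22} and \S\ref{subsecDiri3}.
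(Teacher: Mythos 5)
Your proof is correct and follows precisely the template the paper itself prescribes for this abstract statement (the paper gives no explicit proof, only asserting it follows "by the same idea and modifying the proof above slightly"). You correctly reduce to the counting-function bound, split into the three terms at scale $\mu=a^2\lambda$, promote the $\gamma=1$ hypotheses on $\Sigma_1$ to $\gamma=d/2$ via the identity \eqref{integral} with the constant check $C_dL_{d/2,d'}=C_{d+d'}$, derive the Li--Yau-type bound on $\mathcal N_{(s_k)}$ from the strict $\Sigma_1$ inequality, handle the borderline $d=2$ sub-leading term exactly as in \eqref{star}, and split the $\lambda$-range into three zones as in \S\ref{subsec23}--\S\ref{subsecDiri3} and \S\ref{mfld23}.
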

	
	Similarly one may write down an abstract version that extends the results for the Neumann eigenvalues above, in which case one may relax the condition on $\mathcal N_{(t_k)}(\lambda)$ to 
	\begin{equation}\label{LBforN}
		\mathcal  N_{(t_k)}(\lambda) \ge V_t C_{d} \lambda^{\frac d2}-B_1 \lambda^{\frac{d-1}2}, \qquad \forall \lambda>0,
	\end{equation} 
	and pose suitable conditions on $(s_k)$ (including a Szeg\"o-Weinberger type condition on $s_1$).

	As a consequence, we could get a bunch of eigenvalue problems that satisfies P\'olya inequalities. For example,  let $(M,g)$ be a compact Riemannian manifold  of dimension $d\ge 2$, with piecewise smooth boundary $\partial M$. Let  $(H)$ be certain boundary condition so that the Laplace-Beltrami operator  on $(M,g)$ has discrete spectrum.  As usual we denote the corresponding eigenvalue counting function by $\mathcal N^{(H)}_M(\lambda)$. Then we have 
\begin{theorem}\label{mthm4}
	Let $\Omega\subset \mathbb R^{d_1}$ be a bounded domain with Lipschitz boundary and consider the product manifold $a\Omega \times M$. 
	\begin{enumerate}[(1)]
		\item If $\mathcal N^{(H)}_M(\lambda)$ satisfies \eqref{UBforN}, then  there exists $a_0>0$ (depends on $\Omega$ and $M$) such that for any $0<a<a_0$,  the eigenvalues of the Laplace-Beltrami operator  on  $a\Omega \times M$ with the following mixed boundary condition 
		\[\text{ Dirichlet condition on }\partial (a\Omega)\times M, \quad \text{   condition (H) on }  a\Omega\times\partial M \]  
		satisfy P\'olya's conjecture \eqref{PCDir},\\
		\item If $\mathcal N^{(H)}_M(\lambda)$ satisfies \eqref{LBforN}, then  there exists $a_0>0$ (depends on $\Omega$ and $M$) such that for any $0<a<a_0$,  the eigenvalues of the Laplace-Beltrami operator  on  $a\Omega \times M$
		with the following mixed boundary condition 
		\[\text{ Neumann condition on }\partial (a\Omega)\times M, \quad \text{   condition (H) on }  a\Omega\times\partial M \]  
		satisfy P\'olya's conjecture \eqref{PCNeu}.
	\end{enumerate}
\end{theorem}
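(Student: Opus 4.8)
The plan is to read off both parts from the abstract theorem stated above (for part~(1)) and from its Neumann analogue (for part~(2)), so that no new estimate is needed beyond choosing the right sequences and matching the volume constants. First I would reduce, via \cite[Proposition~3.1]{FLP}, to connected $\Omega$ and $M$, and rescale so that $\Omega=(0,1)$ when $d_1=1$. Let $(\tau_k)$ denote the eigenvalues of the Laplace--Beltrami operator on $(M,g)$ with boundary condition $(H)$, so $\mathcal N_{(t_k)}=\mathcal N^{(H)}_M$. Since the Laplacian on the Riemannian product $a\Omega\times M$ is the sum of the Laplacian on $a\Omega$, acting in the first tensor factor of $L^2(a\Omega)\otimes L^2(M)$, and the Laplacian on $M$, acting in the second, and since the prescribed mixed boundary condition is imposed factor-wise, the form domain of the product problem is the Hilbert tensor product of the two factor form domains; hence its eigenfunctions are products $\varphi_l\otimes\psi_k$ and its spectrum is $\{a^{-2}s_l+\tau_k\}$, where $(s_l)$ is the Dirichlet spectrum of $\Omega$ in part~(1) and the Neumann spectrum of $\Omega$ in part~(2). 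Consequently the counting function of $a\Omega\times M$ equals $\sum_{s_l<a^2\lambda}\mathcal N^{(H)}_M(\lambda-a^{-2}s_l)$, which is exactly the object the abstract theorem is built to estimate.

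For part~(1) I would apply the abstract theorem with $t_k=\tau_k$, $d=d_2=\dim M$, $V_t=|M|$, and $B_1,B_2$ as in \eqref{UBforN}; and, when $d_1\ge2$, with $s_k=\lambda_k(\Omega)$, $V_s=|\Omega|$, $d'=d_1$. The strict inequality demanded of $(s_k)$ is Proposition~\ref{<Diri} (equivalently Corollary~\ref{gammaRiezeDiri}), and the two-term upper bound with positive gain $C'$ is the Frank--Larson estimate \eqref{RieszDiri}, which applies because a $C^1$ boundary is Lipschitz. When $d_1=1$ one uses instead the alternative $s_k=\pi^2k^2$, $V_s=1$, $d'=1$ allowed by the abstract theorem. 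In either case the conclusion reads
\[
\nu_k(a)\ \ge\ \frac{4\pi^2}{\bigl(\omega_{d_1+d_2}\,a^{d_1}V_sV_t\bigr)^{2/(d_1+d_2)}}\,k^{2/(d_1+d_2)},
\]
and since $|a\Omega\times M|=a^{d_1}|\Omega||M|=a^{d_1}V_sV_t$ this is precisely \eqref{PCDir} for $a\Omega\times M$. Part~(2) is symmetric, using the Neumann version of the abstract theorem: take $t_k=\tau_k$ with the lower bound \eqref{LBforN}, and $s_k=\mu_k(\Omega)$ with $\mu_0(\Omega)=0$ and $d'=d_1$, invoking the strict reverse inequality of Corollary~\ref{gammaRiezeNeu} together with the Frank--Geisinger two-term lower bound \eqref{RiezsNeu} (available since $\partial\Omega$ is $C^1$); the Szeg\"{o}--Weinberger--type hypothesis on $s_1$ there is the genuine Szeg\"{o}--Weinberger inequality $\mu_1(\Omega)\le(C_{d_1}|\Omega|)^{-2/d_1}$ for Euclidean domains, and the output is \eqref{PCNeu} for $a\Omega\times M$.

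The one step that needs genuine care --- and which I would single out as the main obstacle --- is the Dirichlet bookkeeping when $0$ lies in the spectrum of $(M,(H))$: the constant $B_2$ in \eqref{UBforN} absorbs the extra unit contributed by the zero mode, so after summation over $l$ the total excess is $B_2\cdot\#\{l:s_l<a^2\lambda\}$, a term of order $\lambda^{d_1/2}$. Since $d_2\ge2$ this is of strictly lower order than the gain term of order $\lambda^{(d_1+d_2-1)/2}$ supplied by \eqref{RieszDiri}, and it is controlled exactly as the quantity $Z^\lambda_a$ was controlled in \S\ref{mfld23}, via the Li--Yau bound \eqref{LYineq2} (or by a direct count when $d_1=1$). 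All the remaining steps --- splitting into ranges of $\lambda$, the large-$\lambda$ comparison of the Seeley loss against the Frank--Larson (resp.\ Frank--Geisinger) gain, and the small-$\lambda$ argument resting on the strictness of Laptev's inequalities --- are verbatim repetitions of the proofs of Theorems~\ref{mthm2}, \ref{mthm} and \ref{mthm3}, which I would simply cite.
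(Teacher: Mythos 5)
Your proposal is correct and takes the same route as the paper: the paper itself derives Theorem~\ref{mthm4} by invoking the preceding abstract theorem (and its Neumann analogue) with exactly the data you spell out, namely $t_k=\tau_k$, $d=d_2$, $V_t=|M|$ from \eqref{UBforN}/\eqref{LBforN}, and $s_k=\lambda_k(\Omega)$ or $\mu_k(\Omega)$ (or $\pi^2k^2$ when $d_1=1$) with the strict Laptev inequalities and the Frank--Larson/Frank--Geisinger two-term gains. Your remark that the $B_2$ term contributes $O(\lambda^{d_1/2})$ after summation, absorbed by the gain of order $\lambda^{(d_1+d_2-1)/2}$ via the Li--Yau (or direct) count as in \S\ref{mfld23}, is precisely the one nontrivial point beyond bookkeeping, and you handle it as the paper does.
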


For example, one may take the condition (H) to be either  Dirichlet boundary condition or Neumann boundary condition or Robin ($\frac{\partial f}{\partial \nu}=\rho f$, with bounded $\rho$) boundary condition, and in all these cases the inequalities  \eqref{UBforN} and  \eqref{LBforN} hold. Thus one   get many eigenvalue problems whose eigenvalues  satisfy P\'olya's conjecture.

\section{Another  class of products satisfying P\'olya's conjecture}\label{spepro}

As we have mentioned in the introduction, A. Laptev \cite{AL} proved that P\'olya's conjecture holds for $\Omega_1 \times \Omega_2$ if it holds for $\Omega_2 \subset \mathbb R^{d_2}$ ($d_2 \ge 2$). In this section, we apply techniques in the proofs of Theorem \ref{mthm2} and \ref{mthm} to show that P\'olya's conjecture holds for a class of domains that are close to such products. More precisely, we will show that for any  $\Omega_3 \subset \Omega_2$, the difference $\Omega_1 \times \Omega_2 -\Omega_1 \times a\Omega_3$  satisfies P\'olya's conjecture for $a$ small enough. The only new input we need is the following well-known fact: If $\Omega_3 \subset \Omega_2$, then 	
\[\mathcal N^D_{\Omega_2\setminus\Omega_3}(\lambda)\leq \mathcal N^D_{\Omega_2}(\lambda)- \mathcal N^D_{\Omega_3}(\lambda),\] 
and if $\Omega_3 \Subset \Omega_2$, then 
\[\mathcal N^N_{\Omega_2\setminus \Omega_3}(\lambda)\ge \mathcal N^N_{\Omega_2}(\lambda)- \mathcal N^N_{\Omega_3}(\lambda).\]

Now we state and prove our results. We remark that these theorems also have some abstract version. 
\begin{theorem}\label{speDiri}
	Let $\Omega_1\subset \mathbb R^{d_1}$ be a bounded domain with Lipschitz boundary, $\Omega_2\subset \mathbb R^{d_2}$ ($d_2 \ge 2$) be a bounded domain which satisfies the Dirichlet P\'olya's conjecture and $\Omega_3\subset \Omega_2$ be a bounded domain with piece-wise smooth boundary. Then there exists $a_0>0$ (depends on $\Omega_1,\Omega_2,\Omega_3$) such that for any $0<a<a_0$, the product $\Omega_1\times (\Omega_2\setminus a\Omega_3)$ satisfies the Dirichlet P\'olya's conjecture \eqref{PCDir}.
\end{theorem}

\begin{proof}
	By \eqref{SeeleyDN}, there exists $C(\Omega_3)>0$ such that 
	\begin{equation}\label{re2}
		\mathcal N^D_{\Omega_3}(\lambda) \ge C_{d_2}|\Omega_3| \lambda^{\frac {d_2} 2} - C(\Omega_3) \lambda^{\frac{d_2-1}{2}},\qquad \forall \lambda>0.
	\end{equation}
	It follows that  for any  $\lambda>0$,
	\[
	\aligned
	\mathcal N^D_{(\Omega_2\setminus a\Omega_3)}(\lambda) 
	& \leq \mathcal N^D_{\Omega_2}(\lambda)- \mathcal N^D_{a\Omega_3} (\lambda)
	\\&
	\le C_{d_2}(|\Omega_2|-a^{d_2}|\Omega_3|)\lambda^{\frac{d_2} 2}+ C(\Omega_3)a^{d_2-1}\lambda^{\frac{d_2-1}{2}}.
	\endaligned
	\]
	
	Now the arguments are similar to those in Section \ref{subsec23}, \ref{subsec22}, \ref{Diri,d=2} and \ref{subsecDiri3}. For example, if  $d_1\ge 2$ and $d_2\ge 3$, then as in Section \ref{subsec23}, there exists $C_1(\Omega_1)>0$ such that 
	\[
	\begin{aligned}
		\mathcal N^D_{\Omega_1\times (\Omega_2\setminus a\Omega_3)}(\lambda) \le& C_{d_1+d_2}|\Omega_1|(|\Omega_2|-a^{d_2}|\Omega_3|)\lambda^{\frac{d_1+d_2}{2}}\\
		&-\frac{1}{5} C_{d_1+d_2-1} |\partial \Omega_1| (|\Omega_2|-a^{d_2}|\Omega_3|) \lambda^{\frac{d_1+d_2-1}{2}}\\
		&+ L_{\frac{d_2-1}{2}, d_1}|\Omega_1| C(\Omega_3)a^{d_2-1}\lambda^{\frac{d_1+d_2-1}{2}},\qquad \forall \lambda> C_1(\Omega_1).
	\end{aligned}
	\]
	Thus if we assume 
	\[
	a<\min \bigg(\big(\frac{|\Omega_2|}{2|\Omega_3|}\big)^{\frac{1}{d_2}}, \big(\frac{C_{d_2-1}|\partial \Omega_1||\Omega_2|}{10|\Omega_1|C(\Omega_3)} \big)^{\frac1 {d_2-1}}\bigg),
	\]
	then for any $\lambda> C_1(\Omega_1)$, we will get
	\[
	\mathcal N^D_{\Omega_1\times (\Omega_2\setminus a\Omega_3)}(\lambda) \le C_{d_1+d_2}|\Omega_1|(|\Omega_2|-a^{d_2}|\Omega_3|)\lambda^{\frac{d_1+d_2}{2}}.
	\]
	We omit the remaining part of the proof. 
\end{proof}

For Neumann eigenvalues, we have

\begin{theorem}\label{speNeu}
	Let $\Omega_1\subset \mathbb R^{d_1}$ be a bounded domain with Lipschitz boundary, $\Omega_2\subset \mathbb R^{d_2}$ ($d_2 \ge 2$) be a bounded domain which satisfies the Neumann P\'olya's conjecture and $0 \in \Omega_3\Subset \Omega_2$ be a bounded domain with piecewise smooth boundary.
	Then there exists $a_0>0$ (depends on $\Omega_1,\Omega_2,\Omega_3$) such that for any $0<a<a_0$, the product $\Omega_1\times (\Omega_2\setminus a\Omega_3)$ satisfies the Neumann P\'olya's conjecture \eqref{PCNeu}.
\end{theorem}

\begin{proof} 	
	Again by \eqref{SeeleyDN}, there exists $C_1(\Omega_3)>0$ such that 
	\begin{equation}\label{re4}
		\mathcal N^N_{\Omega_3}(\lambda) \le C_{d_2}|\Omega_3| \lambda^{\frac {d_2} 2} + C_1(\Omega_3) \lambda^{\frac{d_2-1}{2}}+1,\qquad \forall \lambda>0.
	\end{equation}
	It follows that for any $\lambda>0$,
	\[ 
	\aligned
	\mathcal N^N_{\Omega_2\setminus a\cdot \Omega_3}(\lambda) & \ge \mathcal N^N_{\Omega_2}(\lambda)- \mathcal N^N_{a\cdot\Omega_3}(\lambda)
	\\& \ge C_{d_2}(|\Omega_2|-a^{d_2}|\Omega_3|)\lambda^{\frac{d_2} 2}- C_1(\Omega_3)a^{d_2-1}\lambda^{\frac{d_2-1}{2}}-1.
	\endaligned\] 
	
	If $d_1\ge 2$, similar to Section \ref{Neu3}, there exists $C_1(\Omega_1)>0$ such that  
	\[
	\begin{aligned}
		\mathcal N^N_{\Omega_1\times (\Omega_2\setminus a\cdot \Omega_3)}(\lambda)\ge&
		C_{d_1+d_2}|\Omega_1|(|\Omega_2|-a^{d_2}|\Omega_3|)\lambda^{\frac{d_1+d_2}{2}}\\
		&+ \frac{1}{5} C_{d_1+d_2-1} (|\Omega_2|-a^{d_2}|\Omega_3|)|\partial \Omega_1| \lambda^{\frac{d_1+d_2-1}{2}}\\
		&-C_1(\Omega_3)a^{d_2-1} 2^{\frac{d_1}{2}+1} C_{d_1} B(\frac{d_1}{2}, \frac{d_2+1}{2})|\Omega_1|d_1 \lambda^{\frac{d_1+d_2-1}{2}}\\
		&- \mathcal N^N_{\Omega_1}(\lambda),\qquad \forall \lambda>C_1(\Omega_1).
	\end{aligned} 
	\]
	Next, by \eqref{Weyl001}, there exists $C_2(\Omega_1)>0$ such that 
	\[
	\mathcal N^N_{\Omega_1}(\lambda)\le 2 C_{d_1}|\Omega_1|\lambda^{\frac{d_1}{2}},\qquad \forall \lambda>C_2(\Omega_1).
	\]
	Thus if we assume
	\[
	a<\min\bigg( \big(\frac{|\Omega_2|}{2|\Omega_3|}\big)^{\frac{1}{d_2}}, \big(\frac{C_{d_1+d_2-1}|\Omega_2||\partial \Omega_1|}{20 C_1(\Omega_3) 2^{\frac{d_1}{2}+1} C_{d_1} B(\frac{d_1}{2}, \frac{d_2+1}{2})|\Omega_1|d_1} \big)^{\frac 1 {d_2-1}} \bigg),
	\]
	then for all
	\[
	\lambda>\max\bigg( C_1(\Omega_1), C_2(\Omega_1), \big(\frac{40 C_{d_1}|\Omega_1|}{C_{d_1+d_2-1}|\Omega_2||\partial \Omega_1|}\big)^{\frac 2{d_2-1}} \bigg)=:\Lambda,
	\]
	one has
	\[
	\mathcal N^N_{\Omega_1\times (\Omega_2\setminus a\cdot \Omega_3)}(\lambda)\ge
	C_{d_1+d_2}|\Omega_1|(|\Omega_2|-a^{d_2}|\Omega_3|)\lambda^{\frac{d_1+d_2}{2}}.
	\]
	If $\lambda\le \Lambda$, by \eqref{impineqNeu}, one has
	\[
	\begin{aligned}
		\mathcal N^N_{\Omega_1\times \Omega_2}(\lambda)&=\sum_{\mu_k(\Omega_1)<\lambda} \mathcal N^N_{\Omega_2}(\lambda-\mu_k(\Omega_1))\\&\ge \sum_{\mu_k(\Omega_1)<\lambda} C_{d_2}|\Omega_2|(\lambda-\mu_k(\Omega_1))^{\frac{d_2}{2}}\\
		&>C_{d_1+d_2}|\Omega_1||\Omega_2|\lambda^{\frac{d_1+d_2}{2}}
	\end{aligned}
	\]
	which implies
	\[
	\mu_k(\Omega_1\times \Omega_2)< \frac{4\pi^2}{(\omega_{d_1+d_2} |\Omega_1||\Omega_2|)^{\frac 2{d_1+d_2}}} k^{\frac 2{d_1+d_2}},\qquad \forall k\in \mathbb Z_{\ge0}.
	\]
	Since for $a$ small enough, there are at most $\mathcal N^N_{\Omega_1\times\Omega_2}(2\Lambda)$ eigenvalues below $\Lambda$, the conclusion follows from the fact that
	\[
	\lim_{a\to 0^+}\mu_k(\Omega_1\times (\Omega_2\setminus a\cdot \Omega_3))= \mu_k(\Omega_1\times \Omega_2),\qquad \forall k\in \mathbb Z_{\ge0}.
	\]
	
	For the case $d_1=1$, one just need to  modify arguments in Section \ref{Neud=2} and \ref{Neud3} as above. 
\end{proof}

\section{Two examples with explicit constants}\label{twoex}

In this section, we give two examples for which one can calculate the constants involved in the proof, and thus give  explicit domains/manifolds for which P\'olya's conjecture holds. 

We first construct a planar domain $\Omega$ 
for which we can calculate the constant $C(\Omega)$ in \eqref{2termweyl} explicitly, and thus find out the number $a_0$  in Theorem \ref{mthm} for  $\Omega$.

Let $S$ be a square with  side length 10 and $T$ be an equilateral triangle with  side length 1. The domain $\Omega$ is constructed by placing $T$ at the center of one side of $S$, as shown by the picture below: \\
\includegraphics[scale=1]{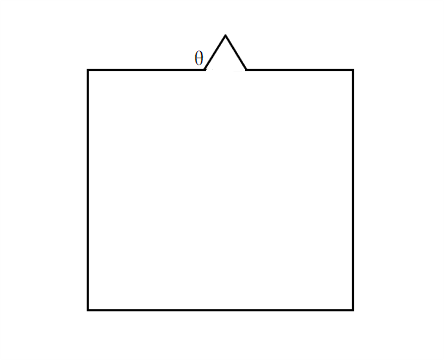}\\
Note that the angle $\theta=\frac{2\pi}3$, which implies that $\Omega$ cannot tile $\mathbb R^2$. In what follows we prove
\begin{proposition}
	For any $a \le \frac 1{4\pi}$, the Dirichlet eigenvalues of $(0,a) \times \Omega$ satisfies P\'olya's conjecture \eqref{PCDir}. 
\end{proposition}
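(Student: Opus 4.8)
The strategy is to make every constant in the proof of Theorem \ref{mthm} explicit for the specific domain $\Omega=S\cup T$, and then check that $a_0=\frac{1}{4\pi}$ works. Since $\Omega$ is a planar domain (so $d=2$), the relevant case is the Dirichlet case with $d=2$ (\S\ref{Diri,d=2}), where the only condition needed was $a<\frac{|\Omega|}{8\pi C(\Omega)}$, with $C(\Omega)$ the constant in the two-term bound $\mathcal N^D_\Omega(\lambda)\le \frac{|\Omega|}{4\pi}\lambda+C(\Omega)\lambda^{1/2}$. So the whole proof reduces to producing an \emph{explicit} admissible value of $C(\Omega)$ and checking $\frac{1}{4\pi}\le \frac{|\Omega|}{8\pi C(\Omega)}$, i.e. $C(\Omega)\le \frac{|\Omega|}{2}$. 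Here $|\Omega|=100+\frac{\sqrt3}{4}$, so it suffices to exhibit $C(\Omega)\le 50$ (a comfortable margin).

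First I would compute $|\Omega|=|S|+|T|=100+\frac{\sqrt3}{4}$ and note that $\Omega$ is not convex but can be handled by domain monotonicity: $\Omega\subset S'$, where $S'$ is a slightly enlarged rectangle (or one can bound $\mathcal N^D_\Omega$ directly). The cleanest route to an explicit $C(\Omega)$ is to use the eigenvalue monotonicity $\lambda_k(\Omega)\ge\lambda_k(S\cup T')$ is awkward, so instead I would split $\Omega$ along the shared edge: by Dirichlet bracketing, $\mathcal N^D_\Omega(\lambda)\le \mathcal N^D_S(\lambda)+\mathcal N^D_T(\lambda)$ is false in the wrong direction; rather, \emph{Dirichlet--Neumann bracketing} gives $\mathcal N^D_\Omega(\lambda)\le \mathcal N^N_S(\lambda)+\mathcal N^N_T(\lambda)$ after inserting a Neumann wall along the edge where $T$ meets $S$. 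Then for the square $S$ (side $10$) the Neumann eigenvalues are $\frac{\pi^2}{100}(m^2+n^2)$, $m,n\ge 0$, so $\mathcal N^N_S(\lambda)=\#\{(m,n)\in\mathbb Z_{\ge0}^2: m^2+n^2<\frac{100\lambda}{\pi^2}\}$, which by a lattice-point count is $\le \frac{|S|}{4\pi}\lambda+\frac{10}{\pi}\sqrt\lambda+1\le \frac{100}{4\pi}\lambda+\frac{10}{\pi}\sqrt\lambda+1$ for all $\lambda>0$ (elementary: the first-quadrant part of a disk of radius $r$ contains at most $\frac{\pi r^2}{4}+r+1$ lattice points). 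For the unit equilateral triangle $T$ the Neumann spectrum is also explicit (Lamé/Pinsky), giving a bound $\mathcal N^N_T(\lambda)\le \frac{|T|}{4\pi}\lambda+c\sqrt\lambda+c'$ with small explicit $c,c'$; even the crude bound via a bounding rectangle of area $\le 1$ suffices. Collecting terms and absorbing the $O(\sqrt\lambda)$ and $O(1)$ pieces into a single $C(\Omega)\lambda^{1/2}$ term (using $1\le \sqrt\lambda$ for $\lambda\ge 1$, and checking $\lambda<1$ separately where $\mathcal N^D_\Omega(\lambda)=0$ since $\lambda_1(\Omega)$ is bounded below by $\lambda_1$ of a containing square, which exceeds $1$), I expect to land on a value like $C(\Omega)\le 10$ or so, well under $50$.

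The main obstacle is getting a genuinely \emph{valid} two-term upper bound $\mathcal N^D_\Omega(\lambda)\le \frac{|\Omega|}{4\pi}\lambda + C(\Omega)\sqrt\lambda$ that holds \emph{for all} $\lambda>0$ (not just asymptotically) with an explicit constant — the reentrant corner of angle $\frac{2\pi}{3}$ where $T$ meets $S$ means $\Omega$ is not convex, so one cannot just quote a clean convex-domain bound. Dirichlet--Neumann bracketing resolves this, but one must be careful that the inserted Neumann wall is placed along a segment so that the two pieces $S$ and $T$ are each handled by explicit spectra, and that the $O(1)$ terms are honestly tracked. Once $C(\Omega)$ is pinned down explicitly with $C(\Omega)\le \frac{|\Omega|}2$, the proposition follows immediately: for $a\le \frac1{4\pi}<\frac{|\Omega|}{8\pi C(\Omega)}$, \S\ref{Diri,d=2} gives $\mathcal N^D_{(0,a)\times\Omega}(\lambda)\le C_3 a|\Omega|\lambda^{3/2}$ for all $\lambda>0$, which is exactly \eqref{PCDir} for $(0,a)\times\Omega$. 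I would close by remarking that $\Omega$ is non-tiling (angle $\frac{2\pi}{3}$ at the reentrant vertex), so this yields an explicit non-tiling domain whose thin product over an interval satisfies P\'olya's conjecture.
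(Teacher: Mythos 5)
Your overall strategy is the same as the paper's: reduce to the threshold $a<\frac{|\Omega|}{8\pi C(\Omega)}$ from \S\ref{Diri,d=2}, compute an explicit constant $C(\Omega)$ in the two-term bound by Dirichlet--Neumann bracketing $\mathcal N^D_\Omega\le\mathcal N^N_\Omega\le\mathcal N^N_S+\mathcal N^N_T$, and use the explicit lattice descriptions of the Neumann spectra of $S$ and $T$ (the paper uses Pinsky's formula for $T$, as you also suggest, and arrives at $C(\Omega)=50$, which is precisely the borderline $|\Omega|/2$ needed for $a_0\ge\frac1{4\pi}$).

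However, two steps in your plan are not correct as stated. First, the parenthetical ``even the crude bound via a bounding rectangle of area $\le 1$ suffices'' for $\mathcal N^N_T$ does not work: there is no domain monotonicity for Neumann eigenvalues, so $T\subset R$ gives no comparison between $\mathcal N^N_T$ and $\mathcal N^N_R$. (Indeed a thin sliver inside a rectangle can have far more low Neumann eigenvalues than the rectangle.) You must go through the explicit spectrum of the equilateral triangle, as you also propose and as the paper does. Second, your treatment of the $O(1)$ term relies on the claim that $\lambda_1(\Omega)$ exceeds $1$ because it is bounded below by $\lambda_1$ of a containing square; but any square containing $\Omega$ has side at least $10$, so $\lambda_1\le 2\pi^2/100\approx 0.2$. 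The correct lower bound (via Faber--Krahn, as in the paper, or via the containing square) is only of order $10^{-1}$, so you must absorb the constant term into the $\sqrt\lambda$ term using $1\le\sqrt{10}\,\sqrt\lambda$ on $\lambda\ge 10^{-1}$ rather than $1\le\sqrt\lambda$ on $\lambda\ge 1$. Both issues are repairable, and after repair your argument coincides with the paper's; but as written the proposal has a false monotonicity step and a false numerical estimate, and it does not actually establish the claimed $C(\Omega)\le 10$ — you should either carry out the Pinsky computation (which the paper does, giving $C(\Omega)=50$) or accept the weaker, explicitly verified bound.
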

\begin{proof}
	By Faber–Krahn's inequality (\cite{F23}, \cite{K25}), 
	\[
	\lambda_1(\Omega)\ge \frac{4\pi^2}{\omega_2|\Omega|}> 10^{-1}.
	\]
	So $\mathcal N^D_\Omega(\lambda)=0$ for $\lambda \le 10^{-1}$.
	
	Now suppose $\lambda > 10^{-1}$. For the square $S$ we have  
	\begin{equation*} \label{S} 
		\begin{aligned}
			\mathcal N^N_S(\lambda)&= \#\{(m,n)\in \mathbb Z_{\ge 0}^2|\ m^2+n^2< \frac{100\lambda}{\pi^2}\} \\
			&\le \frac{100\lambda}{4\pi}+ \frac{20}{\pi}\sqrt \lambda +2 \\
			&< \frac{100\lambda}{4\pi}+ 20 \sqrt \lambda.
		\end{aligned}
	\end{equation*}
	For the triangle $T$, let $P=\{(x,2x)\}_{x\in \mathbb R}\cup \{(2x,x)\}_{x\in \mathbb R} \cup \{(x,-x)\}_{x\in\mathbb R}$, then by \cite[Proposition 3]{Pin80}, one has
	\begin{equation*} \label{T} 
		\begin{aligned}
			\mathcal N^N_T(\lambda)=&\frac{1}{6}\#\{(m,n)\in \mathbb Z^2|\ (m,n)\notin P, 3|(m+n), \frac{16\pi^2}{27}(m^2+n^2-mn)<\lambda\}+ \\
			&\frac{1}{3}\#\{(m,n)\in \mathbb Z^2|\ (m,n)\in P, \frac{16\pi^2}{27}(m^2+n^2-mn)<\lambda\}+\frac{2}{3}.
		\end{aligned}
	\end{equation*}
	Since  
	\[
	\begin{aligned}
		&\#\{(m,n)\in \mathbb Z^2|\ (m,n)\notin P, 3|(m+n), \frac{16\pi^2}{27}(m^2+n^2-mn)<\lambda\} \\
		=&\#\{(m,n)\in \mathbb Z^2|\ (m,n)\notin P, 3|(m+n), (m-\frac{n}{2})^2+\frac{3n^2}{4}< \frac{27\lambda}{16\pi^2}\} \\
		\le&\frac{1}{3}\#\{(m,n)\in \mathbb Z^2|\ m^2+\frac{3n^2}{4}< \frac{27\lambda}{16\pi^2}\}+ 2(\frac{3\sqrt 3 \sqrt \lambda}{4\pi}+\frac{3\sqrt\lambda}{2\pi})+4 \\
		<& \frac{3\sqrt 3 \lambda}{8\pi}+ 60 \sqrt \lambda 
	\end{aligned}
	\]
	and 
	\[
	\begin{aligned}
		&\#\{(m,n)\in \mathbb Z^2|\ (m,n)\in P, \frac{16\pi^2}{27}(m^2+n^2-mn)<\lambda\}\\
		\le&3 \#\{k\in \mathbb Z|\ k^2< \frac{9\lambda}{16\pi^2}\}< \frac{9\sqrt{\lambda}}{4\pi}+3< 30\sqrt \lambda,
	\end{aligned}
	\]
	we get 
	\[
	\mathcal N^N_T(\lambda)< \frac{\sqrt 3\lambda}{16\pi}+ 30\sqrt\lambda.
	\]
	So we get  
	\begin{equation*}\label{ex1} 
		\mathcal N^D_\Omega(\lambda)< \mathcal N^N_\Omega(\lambda)\le \mathcal N^N_S(\lambda)+ \mathcal N^N_T(\lambda)\le \frac{1}{4\pi}(100+\frac{\sqrt 3}{4})\lambda +50 \sqrt\lambda.
	\end{equation*} 
	In other words, one may take $C(\Omega)=50$ in \eqref{2termweyl}. It follows from the proof  in \S\ref{Diri,d=2} that  for any $a \le \frac{1}{4\pi}$, all Dirichlet eigenvalues of $(0,a)\times\Omega$ satisfy P\'olya's Conjecture \eqref{PCDir}.
\end{proof}	

\begin{remark}\label{computaCOmega}
	Note that if $\Omega_a, \Omega_b\subset \mathbb R^d$ which intersect only at boundary,  and
	\[
	\mathcal N^N_{\Omega_a}(\lambda)\le C_d|\Omega_a|\lambda^{\frac{d}{2}}+ C_a\lambda^{\frac{d-1}{2}},\qquad \mathcal N^N_{\Omega_b}(\lambda)\le C_d|\Omega_b|\lambda^{\frac{d}{2}}+ C_b\lambda^{\frac{d-1}{2}},
	\]
	then
	\[
	\begin{aligned} 
	\mathcal N^D_{\Omega_a\cup\Omega_b}(\lambda)&< \mathcal N^N_{\Omega_a\cup\Omega_b}(\lambda)\le \mathcal N^N_{\Omega_a}(\lambda)+ \mathcal N^N_{\Omega_b}(\lambda)\\
	&\le C_d(|\Omega_a|+|\Omega_b|)\lambda^{\frac{d}{2}}+(C_a+C_b)\lambda^{\frac{d-1}{2}}.
	\end{aligned}
	\]
    In particular, one can calculate $C(\Omega)$ in \eqref{2termweyl} if $\Omega$ is a union of many squares and equilateral triangles, or if $\Omega$ is a union of many $d$-dimensional cubes.  
\end{remark}

Finally we turn to the Riemannian manifold setting and consider the standard two-sphere $M=S^2$. 
It is well known that the eigenvalues of $(S^2, g_0)$ are $k(k+1)$, with multiplicity $2k+1$ for all $k\in \mathbb Z_{\ge 0}$. It follows that 
\[
\mathcal N_{S^2}\big( k(k+1) \big)=k^2, \qquad \mathcal N_{S^2}\big(k(k+1)+\varepsilon\big)=(k+1)^2.
\] 
In other words, one can choose $C(S^2)$ in \eqref{MNeu} to be 1 and $C_1(S^2)$ in \eqref{MDiri} to be  1. Plugging into \eqref{Diri12} and \eqref{caseNeud2},  we get
\begin{proposition} \label{S2example}
	For any $a \le \frac {\pi}{24}$, the manifold $(0,a)\times S^2$ satisfy P\'olya's conjecture \eqref{PCDir} and \eqref{PCNeu}. 
\end{proposition}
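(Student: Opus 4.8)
The plan is to identify $(0,a)\times S^2$ with the ``$d_1=1$, $d_2=2$'' instance of Theorem~\ref{mthm3}, to pin down the constants appearing in \eqref{MNeu} and \eqref{MDiri} for $M=S^2$, and then to substitute those constants into the explicit thresholds \eqref{Diri12} and \eqref{caseNeud2} produced in the proofs above. First I would record the spectral data: the eigenvalues of $(S^2,g_0)$ are $\ell(\ell+1)$, $\ell\ge 0$, with multiplicity $2\ell+1$, so that $\sum_{\ell=0}^{k}(2\ell+1)=(k+1)^2$ gives
\[
\mathcal N_{S^2}(\lambda)=(k+1)^2\quad\text{for }k(k+1)<\lambda\le (k+1)(k+2),\qquad \mathcal N_{S^2}\big(k(k+1)\big)=k^2 .
\]
Since $|S^2|=4\pi$ and $C_2|S^2|=1$, inequality \eqref{MDiri} amounts to $\mathcal N_{S^2}(\lambda)\le\lambda+C_1(S^2)\sqrt\lambda+1$ and \eqref{MNeu} to $\mathcal N_{S^2}(\lambda)\ge\lambda-C(S^2)\sqrt\lambda$. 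On each interval $k(k+1)<\lambda\le(k+1)(k+2)$ one checks $\lambda+\sqrt\lambda+1>k(k+1)+\sqrt{k(k+1)}+1\ge(k+1)^2$ (using $\sqrt{k(k+1)}\ge k$) and $\lambda-\sqrt\lambda\le(k+1)(k+2)-\sqrt{(k+1)(k+2)}<(k+1)^2$ (using $\sqrt{(k+1)(k+2)}>k+1$); the jump values $\lambda=k(k+1)$ and the range $0<\lambda\le 2$ are immediate. Hence one may take $C_1(S^2)=1$ in \eqref{MDiri} and $C(S^2)=1$ in \eqref{MNeu}.

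For the Dirichlet conjecture I would substitute $|M|=4\pi$ and $C_1(M)=1$ into \eqref{Diri12}: the two competing quantities become $\sqrt{|M|\pi/48}=\pi/(2\sqrt3)$ and $|M|/(8\pi C_1(M))=1/2$, so the Dirichlet argument of the $d_1=1,\,d_2=2$ case of Theorem~\ref{mthm3} applies for every $a\le 1/2$. For the Neumann conjecture I would run the Neumann argument of Theorem~\ref{mthm} with $\Omega$ replaced by $S^2$, whose outcome is the threshold \eqref{caseNeud2}. Its first quantity is $|M|/(96\,C(M))=4\pi/96=\pi/24$, which is the binding one. Its second quantity is $(C_3|M|)^{-1}C_1(S^2)^{-3/2}=\tfrac{3\pi}{2}\,C_1(S^2)^{-3/2}$, where $C_1(S^2)$ now denotes the Weyl threshold (finite since $C_3\pi|S^2|=\tfrac23<1=C_2|S^2|$) beyond which $\mathcal N_{S^2}(\lambda)>\tfrac23\lambda$; because $(k+1)^2>\tfrac23(k+1)(k+2)$ as soon as $k\ge 2$, one may take $C_1(S^2)=7$, and then $\tfrac{3\pi}{2}\cdot 7^{-3/2}>\pi/24$. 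Thus the Neumann argument applies for every $a\le\pi/24$.

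Combining the two cases, for every $a\le\pi/24$ both \eqref{PCDir} and \eqref{PCNeu} hold for $(0,a)\times S^2$, which is the assertion. The only genuinely non-mechanical step — and the place an off-by-one or sign slip could intrude — is the uniform-in-$\lambda$ verification of $\lambda-\sqrt\lambda\le\mathcal N_{S^2}(\lambda)\le\lambda+\sqrt\lambda+1$ together with the explicit Weyl lower bound $\mathcal N_{S^2}(\lambda)>\tfrac23\lambda$, carried out at the spectral jumps $\lambda=k(k+1)$ and for small $\lambda$; everything else is a substitution into inequalities already established in the proofs of Theorem~\ref{mthm} and Theorem~\ref{mthm3}.
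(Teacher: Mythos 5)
Your proposal is correct and follows essentially the same route as the paper: read off the spherical-harmonic counting function $\mathcal N_{S^2}$, verify that $C(S^2)=C_1(S^2)=1$ works in \eqref{MNeu} and \eqref{MDiri}, and substitute into the thresholds \eqref{Diri12} and \eqref{caseNeud2} to see that the Neumann bound $|M|/(96\,C(M))=\pi/24$ is the binding one. You are a bit more explicit than the paper in verifying the two-term bounds interval by interval and in checking that the second entry of \eqref{caseNeud2} (via the Weyl threshold $C_1(S^2)=7$) is not the minimum, but the argument is the same.
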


Note that in this example, if we take $a$ large,  
\begin{itemize}
	\item If we take $a>\sqrt{\frac 23}\pi$, then the first Dirichlet eigenvalue of $(0,a) \times S^2$  
	\[\lambda_1 \big((0,a)\times S^2\big)=\frac{\pi^2}{a^2} <\frac{4\pi^2}{({4\pi\omega_3}a)^{\frac 23} },\]
	\item If we take $\frac{\pi}{\sqrt 2}\le a< \sqrt{\frac 23}\pi$, then the first nonzero Neumann eigenvalue of $(0,a) \times S^2$  
	\[\mu_1 \big((0,a)\times S^2\big)=\frac{\pi^2}{a^2} > \frac{4\pi^2}{({4\pi\omega_3}a)^{\frac 23} },\]
\end{itemize}
so P\'olya's inequalities \eqref{PCDir} and \eqref{PCNeu} will not hold for $(0,a)\times S^2$ when $a$ is large.  
\begin{remark}
	We remark that in \cite[Example 2.D]{Fre02}, P. Freitas and I. Salavessa had already observed (from a very simple tiling argument) that $(0,a)\times S^2$ satisfies P\'olya's inequalities for $a$ small enough but  fails to satisfy 	P\'olya's inequalities for $a$ large. Our method is more complicated but has the advantage that we could give explicit estimates of $a$ for   P\'olya's inequalities to hold. We would like to thank the authors for pointing out this fact to us. 
\end{remark}	

\section*{Acknowledgment}

\noindent {\bf Funding} The authors are partially supported by  National Key R and D Program of China 2020YFA0713100, and by NSFC no. 12171446.

\noindent {\bf Data availability statement} This manuscript has no associated data.

\noindent {\bf Conflict of interest}  The authors have no Conflict of interest to declare that are relevant to the content of this article.

\end{document}